\theoremstyle{plain}
\newtheorem{theorem}{Theorem}[section]
\newtheorem{corollary}[theorem]{Corollary}
\newtheorem{lemma}[theorem]{Lemma}
\theoremstyle{definition}
\newtheorem{definition}[theorem]{Definition}
\theoremstyle{remark}
\newcommand{\DD}{{\mathbb D}}
\newcommand{\TT}{{\mathbb T}}
\newcommand{\cH}{{\cal H}}
\let\Re\undefined
\DeclareMathOperator{\Re}{Re}
\renewcommand{\hat}{\widehat}
\begin{document}

\title{Constructive approximation in de Branges--Rovnyak spaces}
\author{O. El-Fallah}
\address{Laboratoire Analyse et Applications (URAC/03),
Universit\'e Mohamed V, B.P. 1014 Rabat, Morocco}
\email{elfallah@fsr.ac.ma} 
\thanks{Research of OE supported by  CNRST (URAC/03) and  Acad\'emie Hassan II des sciences et techniques}

\author{E. Fricain}
\address{Laboratoire Paul Painlev\'e, UFR des Math\'ematiques,
Universit\'e des Sciences et Technologies Lille~1, 59655 Villeneuve
d'Ascq Cedex, France.}
\email{emmanuel.fricain@math.univ-lille1.fr} 
\thanks{Research of EF supported by CEMPI}

\author{K. Kellay}
\address{Institut de Math\'ematiques de Bordeaux, Universit\'e de Bordeaux, 351 cours de la Lib\'eration,
33405 Talence Cedex, France}
\email{karim.kellay@math.u-bordeaux1.fr} 
\thanks{Research of KK supported by UMI-CRM}

\author{J. Mashreghi}
\address{D\'epartement de math\'ematiques et de statistique, 
Universit\'e Laval, 1045 avenue de la M\'edecine, Qu\'ebec (QC), G1V 0A6, Canada}
\email{javad.mashreghi@mat.ulaval.ca} 
\thanks{Research of JM supported by NSERC}

\author{T. Ransford}
\address{D\'epartement de math\'ematiques et de statistique, 
Universit\'e Laval, 1045 avenue de la M\'edecine, Qu\'ebec (QC), G1V 0A6, Canada}
\email{ransford@mat.ulaval.ca}
\thanks{Research of TR supported by NSERC and the Canada research chairs program}

\begin{abstract}
In most classical holomorphic function spaces on the unit disk, a function $f$ can be approximated in the norm of the space by its dilates $f_r(z):=f(rz)~(r<1)$.
We show that this is \emph{not} the case for the de Branges--Rovnyak spaces $\cH(b)$. More precisely, we give an example of a non-extreme point $b$ of the unit ball of $H^\infty$ and a function $f\in\cH(b)$ such that $\lim_{r\to1^-}\|f_r\|_{\cH(b)}=\infty$.

It is  known that, if $b$ is a non-extreme point of the unit ball of $H^\infty$, then polynomials are dense in $\cH(b)$. We give the first constructive proof of this fact.
\end{abstract}

\date{6 January  2015}

\maketitle

\section{Introduction}\label{S:intro}

The de Branges--Rovnyak spaces  
are a family of subspaces $\cH(b)$ of the Hardy space $H^2$,
para\-metrized by elements $b$ of the closed unit ball of $H^\infty$.
We shall give the precise definition  in \S\ref{S:background}.
In general $\cH(b)$ is not closed in $H^2$, 
but it carries its own norm $\|\cdot\|_{\cH(b)}$ making it a Hilbert space.

The spaces $\cH(b)$ were introduced by de Branges and Rovnyak 
in the appendix of \cite{dBR66a} and further studied in their book \cite{dBR66b}.
The initial motivation was to provide canonical model spaces
for certain types of contractions on Hilbert spaces.
Subsequently it was realized that 
these spaces  have several interesting connections
with other topics in complex analysis and operator theory.
For background information
we refer to the books of 
de Branges and Rovnyak \cite{dBR66b}, Sarason \cite{Sa94},
and the forthcoming monograph of 
Fricain and Mashreghi \cite{FM12}.

The general theory of $\cH(b)$-spaces  splits  into two cases, according to whether $b$ is an
extreme point or a non-extreme point of the unit ball of $H^\infty$. For example,
if $b$ is non-extreme, then $\cH(b)$ contains all functions holomorphic in a neighborhood of the closed unit disk $\overline{\DD}$,
whereas  if $b$ is extreme, then $\cH(b)$ contains very few such functions. In particular, $\cH(b)$ contains the polynomials if and only if $b$ is non-extreme, and in this case,
the polynomials are  dense in $\cH(b)$. Proofs of all these facts can be found in Sarason's book \cite{Sa94}.

The  density of polynomials is proved in \cite{Sa94} by showing that their orthogonal complement in $\cH(b)$ is zero.
The proof is non-constructive in the sense that it gives no clue how to find polynomial approximants to a given function. 
We  know of no published work describing  constructive methods of polynomial approximation in $\cH(b)$, 
and it is surely of interest to have such methods available. 

Perhaps the most natural approach is to try using dilations.
Writing $f_r(z):=f(rz)$, the idea is to approximate $f$ by $f_r$ for  some $r<1$, 
and then $f_r$ 
by the partial sums of its Taylor series.
This idea works in many function spaces, but, as we shall see, it fails dismally in $\cH(b)$, at least for certain choices of $b$. Indeed, it can happen that
$\lim_{r\to1^-}\|f_r\|_{\cH(b)}=\infty$, even though $f\in\cH(b)$. We shall prove this in \S\ref{S:dilation}, thereby answering a question posed in \cite{CGR10}.

This phenomenon has other negative consequences, among them the surprising fact that the formula for $\|f\|_{\cH(b)}$ in terms of the Taylor coefficients of $f$, previously known to hold for $f$ holomorphic on a neighborhood of $\overline{\DD}$, actually breaks down for general $f\in\cH(b)$.
It also shows that, in general, neither the Taylor partial sums of $f$, nor their Ces\`aro means need converge to $f$ in $\cH(b)$. 

So, to construct polynomial approximants to functions in $\cH(b)$, a different idea is needed. In \S\ref{S:polyapprox} we shall describe a scheme that achieves this based on Toeplitz operators. The method presupposes that $b$ is non-extreme (as it must), but one of its consequences is an approximation theorem for Toeplitz operators that extends even to the case when $b$ is extreme. We shall give a proof of this extension in \S\ref{S:Toeplitz}.

\section{Background on $\cH(b)$-spaces}\label{S:background}

Given $\psi\in L^\infty(\TT)$, the corresponding Toeplitz operator $T_\psi:H^2\to H^2$ is defined by
\[
T_\psi f:=P_+(\psi f) \qquad(f\in H^2),
\]
where $P_+:L^2(\TT)\to H^2$ denotes the othogonal projection of $L^2(\TT)$ onto $H^2$. 
Clearly $T_\psi$ is a bounded operator on $H^2$ with $\|T_\psi\|\le\|\psi\|_{L^\infty(\TT)}$ (in fact, by a theorem of Brown and Halmos, $\|T_\psi\|=\|\psi\|_{L^\infty(\TT)}$, but we do not need this).
If $h\in H^\infty$, then $T_h$ is simply the operator of multiplication by $h$ and its adjoint is $T_{\overline{h}}$.
Consequently, if $h,k\in H^\infty$, then
$T_{\overline{h}}T_{\overline{k}}=T_{\overline{hk}}=T_{\overline{k}}T_{\overline{h}}$, a useful fact that we shall exploit frequently in what follows.

\begin{definition}
Let $b\in H^\infty$
with $\|b\|_{H^\infty}\le1$.
The associated \emph{de Branges--Rovnyak space} 
$\cH(b)$ is the image of $H^2$ under the operator $(I-T_ bT_{\overline b})^{1/2}$.
We define a norm  on $\cH(b)$ making $(I-T_bT_{\overline{b}})^{1/2}$ a partial isometry
from $H^2$ onto $\cH(b)$, namely
\[
\|(I-T_bT_{\overline{b}})^{1/2}f\|_{\cH(b)}:=\|f\|_{H^2}
\qquad(f\in H^2\ominus \ker(I-T_bT_{\overline{b}})^{1/2})).
\]
\end{definition}

This is the definition of $\cH(b)$ as given in \cite{Sa94}. The original definition of de Branges and Rovnyak,
based on the notion of complementary space, is different but equivalent. An explanation of the equivalence can be found in \cite[pp.7--8]{Sa94}. 
A third approach is to start from the positive kernel
\[
k_w^b(z):=\frac{1-\overline{b(w)}b(z)}{1-\overline{w}z}
\qquad(z,w\in\DD),
\]
and to define $\cH(b)$ as the reproducing kernel Hilbert space associated with this kernel.

As mentioned in the introduction, the theory of $\cH(b)$-spaces is pervaded by a fundamental dichotomy,
namely whether $b$ is or is not an extreme  point of the unit ball of $H^\infty$. 
This dichotomy is illustrated by following result.

\begin{theorem}\label{T:nonextreme}
Let $b\in H^\infty$ with $\|b\|_{H^\infty}\le 1$. The following statements are equivalent:
\begin{itemize}
\item[\rm(i)] $b$ is a non-extreme point of the unit ball of $H^\infty$;
\item[\rm(ii)] $\log (1-|b|^2)\in L^1(\TT)$;
\item[\rm(iii)]  $\cH(b)$ contains all functions holomorphic in a neighborhood of $\overline{\DD}$;
\item[\rm(iv)] $\cH(b)$ contains all polynomials.
\end{itemize}
\end{theorem}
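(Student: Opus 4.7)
My plan is to prove the equivalences via the cycle (i)$\iff$(ii)$\Rightarrow$(iii)$\Rightarrow$(iv)$\Rightarrow$(ii), with (i)$\iff$(ii) being classical.

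\textbf{(i) $\iff$ (ii).} I would invoke the classical de Leeuw--Rudin theorem characterizing extreme points of the closed unit ball of $H^\infty$: a function $b$ in this ball is non-extreme if and only if $\log(1-|b|^2)\in L^1(\TT)$. The argument uses outer factorization together with Jensen's inequality and is standard; I would simply cite it.

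\textbf{(ii) $\Rightarrow$ (iii).} Assuming $\log(1-|b|^2)\in L^1(\TT)$, I would introduce the outer function $a\in H^\infty$ with $a(0)>0$ and $|a|^2+|b|^2=1$ a.e.\ on $\TT$. The central tool is Sarason's $(a,b)$-pair characterization:
\[
f\in\cH(b)\ \iff\ \exists\, f^+\in H^2\text{ with }T_{\overline b}f=T_{\overline a}f^+,\qquad \|f\|_{\cH(b)}^2=\|f\|_{H^2}^2+\|f^+\|_{H^2}^2.
\]
To derive this, I would first verify the operator inequality $T_aT_{\overline a}\le I-T_bT_{\overline b}$ by a direct computation starting from $\|T_{\overline a}f\|^2+\|T_{\overline b}f\|^2=\|f\|^2-\|P_-(\overline af)\|^2-\|P_-(\overline bf)\|^2$, which uses only $|a|^2+|b|^2=1$ and the nonnegativity of the Hankel squares. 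Douglas' factorization lemma then yields a contraction $C$ on $H^2$ with $T_a=(I-T_bT_{\overline b})^{1/2}C$, and some additional bookkeeping (identifying the minimum-norm preimage) upgrades this to the $(a,b)$-characterization above. Granted the characterization, for $f\in\hol(\overline{\DD})$ I would construct $f^+$ explicitly: writing $f=\sum_{n\ge 0} c_nz^n$ with $|c_n|=O(R^{-n})$ for some $R>1$, and solving $T_{\overline a}f_n^+=T_{\overline b}z^n$ by matching Taylor coefficients in a triangular linear system, one obtains polynomial solutions $f_n^+$ of degree $n$ whose $H^2$-norms grow sub-exponentially (since $a(0)\ne0$ gives a stable recursion). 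Hence $f^+:=\sum c_n f_n^+$ converges in $H^2$ and certifies $f\in\cH(b)$.

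\textbf{(iii) $\Rightarrow$ (iv).} Trivial, as polynomials are entire.

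\textbf{(iv) $\Rightarrow$ (ii).} Here it suffices to show $1\in\cH(b)$ forces $\log(1-|b|^2)\in L^1(\TT)$. By Douglas' criterion applied to the range inclusion $\CC\cdot 1\subset\cH(b)=\mathrm{Ran}(I-T_bT_{\overline b})^{1/2}$, there exists $\lambda>0$ with $|f(0)|^2\le\lambda(\|f\|_{H^2}^2-\|T_{\overline b}f\|_{H^2}^2)$ for all $f\in H^2$. Using the identity $\|f\|^2-\|T_{\overline b}f\|^2=\int_\TT(1-|b|^2)|f|^2\,dm+\|P_-(\overline bf)\|^2$ and testing against suitable outer functions (e.g.\ those built from a candidate modulus $\sqrt{1-|b|^2}$), a Szeg\H o-type extremal argument yields the required integrability.

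\textbf{Main obstacle.} The technical heart is step (ii)$\Rightarrow$(iii), in particular the derivation of Sarason's $(a,b)$-pair characterization, which converts the abstract range condition $f\in(I-T_bT_{\overline b})^{1/2}H^2$ into the workable algebraic condition $T_{\overline b}f\in T_{\overline a}H^2$. Once that is in hand, producing $f^+$ for $f\in\hol(\overline{\DD})$ and obtaining its $H^2$-estimate is essentially a matter of triangular linear algebra and routine growth bounds on the Taylor coefficients of $1/a$.
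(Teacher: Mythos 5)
Your step (iv)$\Rightarrow$(ii) contains a genuine error: the reduction ``it suffices to show $1\in\cH(b)$ forces $\log(1-|b|^2)\in L^1(\TT)$'' is false. Take $b(z)=z$. Then $I-T_bT_{\overline b}$ is the orthogonal projection onto the constants, so $\cH(b)=\CC$ and in particular $1\in\cH(b)$; yet $b$ is inner, hence an extreme point, and $\log(1-|b|^2)\equiv-\infty$ is not integrable. The same failure is visible inside your own Douglas computation: the condition $1\in\cH(b)$ gives $|f(0)|^2\le\lambda\bigl(\int_\TT(1-|b|^2)|f|^2\,dm+\|P_-(\overline bf)\|_{L^2}^2\bigr)$, and the Hankel term $\|P_-(\overline bf)\|^2$ sits on the side of the inequality that lets it hold without any contribution from $1-|b|^2$ (for $b=z$ one has $\|P_-(\overline zf)\|^2=|f(0)|^2$ exactly). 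So no Szeg\H{o}-type extremal argument starting from $1\in\cH(b)$ alone can produce the integrability of $\log(1-|b|^2)$; the implication (iv)$\Rightarrow$(i)/(ii) genuinely requires that \emph{all} powers $z^n$ lie in $\cH(b)$, used with some uniformity. (One standard route, via Theorem~\ref{T:hbbar} and Theorem~\ref{T:Hbbar}: if $b$ is extreme then $\log\rho\notin L^1$ with $\rho=1-|b|^2$, Szeg\H{o}'s theorem makes $H^2(\rho)=L^2(\rho)$, and one analyses which $T_{\overline b}z^n$ can lie in $J_\rho^*L^2(\rho)$; this is essentially Sarason's argument in \cite[\S V-1]{Sa94}, which the paper simply cites.)

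The remaining steps are sound in outline, though you should be aware that the paper proves none of them: it cites \cite[Theorem~7.9]{Du00} for (i)$\Leftrightarrow$(ii) and \cite[\S IV-6]{Sa94} for (i)$\Rightarrow$(iii), so you are reconstructing standard material rather than paralleling an argument in the text. Your derivation of the operator inequality $T_aT_{\overline a}\le I-T_bT_{\overline b}$ and the appeal to Douglas' lemma is indeed Sarason's route to Theorem~\ref{T:f+}, and your construction of $f^+$ for $f\in\hol(\overline\DD)$ is essentially the coefficient formula \eqref{E:hbnorm} from \cite{CGR10}. One point there deserves more care than ``a stable recursion'': the clean justification that $\|f_n^+\|_{H^2}$ grows subexponentially is that $\phi=b/a$ (equivalently $1/a$) lies in the Smirnov class $N^+$, whence $\log M(r,\phi)=o((1-r)^{-1})$ and $|\hat\phi(j)|=e^{o(j)}$, which is exactly what is needed to sum against coefficients $\hat f(k)=O(R^{-k})$.
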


\begin{proof}
The equivalence between (i) and (ii) is proved in \cite[Theorem~7.9]{Du00}. 
That (i) implies (iii) is proved in \cite[\S IV-6]{Sa94}.
Clearly (iii) implies (iv).
That (iv) implies (i) follows from \cite[\S V-1]{Sa94}.
\end{proof}

Henceforth we shall simply say that $b$ is `extreme' or `non-extreme', it being understood that this relative to the unit ball of $H^\infty$.

From the equivalence between (i) and (ii), it follows that, if  $b$ is non-extreme,
then there is an  outer function $a$ such that $a(0)>0$ and  $|a|^2+|b|^2=1$ a.e.\ on $\TT$ (see \cite[\S IV-1]{Sa94}). 
The function $a$ is uniquely determined by $b$. We shall call $(b,a)$ a \emph{pair}. 
The following result gives a useful characterization of $\cH(b)$ in this case.

\begin{theorem}[\protect{\cite[\S IV-1]{Sa94}}]\label{T:f+}
Let $b$ be non-extreme, let  $(b,a)$ be a pair and let $f\in H^2$.
Then $f\in\cH(b)$ if and only if 
$T_{\overline b}f\in T_{\overline a}(H^2)$.
In this case, there exists a unique function $f^+\in H^2$
such that $T_{\overline b}f=T_{\overline a}f^+$, and
\begin{equation}\label{E:f+}
\|f\|_{\cH(b)}^2=\|f\|_{H^2}^2+\|f^+\|_{H^2}^2.
\end{equation}
\end{theorem}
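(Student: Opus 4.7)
The plan is to identify $\cH(b)$ with the graph-type subspace
\[
\cM := \{(f, g) \in H^2 \oplus H^2 : T_{\overline{b}} f = T_{\overline{a}} g\},
\]
equipped with the norm $\|(f, g)\|_{\cM}^2 := \|f\|_{H^2}^2 + \|g\|_{H^2}^2$, by showing that the first-coordinate projection $\pi_1 : \cM \to H^2$ is an isometric bijection onto $\cH(b)$. Since both Toeplitz operators involved are bounded, $\cM$ is closed in $H^2 \oplus H^2$, hence a Hilbert space. Moreover, $\pi_1$ is injective and $g$ is uniquely determined by $f$: if $(0, g) \in \cM$ then $T_{\overline{a}} g = 0$, and since $a$ is outer, $aH^2$ is dense in $H^2$, so $\ker T_{\overline{a}} = (aH^2)^\perp = \{0\}$. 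This already settles the uniqueness of $f^+$ in the statement.

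The heart of the argument is to show $\pi_1(\cM) = \cH(b)$ isometrically, which I would do by matching reproducing kernels. On the one hand, the eigenvalue identity $T_{\overline{b}} k_w = \overline{b(w)} k_w$ gives at once that the reproducing kernel of the operator-theoretic $\cH(b)$ is $(I - T_b T_{\overline{b}}) k_w = k_w^b$. On the other hand, $\pi_1(\cM)$ inherits continuous point evaluation from $H^2$, so it is itself a reproducing kernel Hilbert space. To compute its kernel I would first produce the partner of $k_w^b$ via
\[
T_{\overline{b}} k_w^b = \overline{b(w)}\bigl(k_w - T_{|b|^2} k_w\bigr) = \overline{b(w)} T_{|a|^2} k_w = T_{\overline{a}}\bigl(\overline{b(w)} a k_w\bigr),
\]
yielding $(k_w^b)^+ = \overline{b(w)} a k_w$; this uses $T_{|a|^2} + T_{|b|^2} = I$ (linearity in the symbol applied to $|a|^2 + |b|^2 = 1$ on $\TT$) and the factorization $T_{|a|^2} = T_{\overline{a}} T_a$. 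The reproducing property is then a quick calculation: for $(f, f^+) \in \cM$,
\[
\langle f, k_w^b\rangle + \langle f^+, (k_w^b)^+\rangle = f(w) - b(w)(T_{\overline{b}} f)(w) + b(w)(T_{\overline{a}} f^+)(w) = f(w),
\]
where the last two terms cancel by the defining relation of $\cM$.

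By the Moore--Aronszajn theorem, two reproducing kernel Hilbert spaces with the same kernel coincide, so $\pi_1(\cM) = \cH(b)$ as Hilbert spaces, which immediately gives both the characterization $f \in \cH(b) \iff T_{\overline{b}} f \in T_{\overline{a}}(H^2)$ and the norm identity $\|f\|_{\cH(b)}^2 = \|f\|_{H^2}^2 + \|f^+\|_{H^2}^2$. The main technical obstacle is the $\cM$-side kernel computation: identifying $(k_w^b)^+$ requires carefully combining the Pythagorean identity for $a$ and $b$ with the Toeplitz commutation rules $T_{\overline{h}} T_k = T_{\overline{h}k}$ available when $h, k \in H^\infty$. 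After that, everything else is a short diagram chase.
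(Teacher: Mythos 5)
Your argument is correct, and it is genuinely self-contained, which is more than the paper offers: the paper does not prove this theorem at all but simply cites Sarason \cite[\S IV-1]{Sa94}. Sarason's route is operator-theoretic: one first shows $f\in\cH(b)$ iff $T_{\overline{b}}f\in\cH(\overline{b})$ with $\|f\|_{\cH(b)}^2=\|f\|_{H^2}^2+\|T_{\overline{b}}f\|_{\cH(\overline{b})}^2$ (the paper's Theorem~\ref{T:hbbar}), then uses the identity $I-T_{\overline{b}}T_b=T_{\overline{a}}T_a$ together with the Douglas range lemma to identify $\cH(\overline{b})$ with $T_{\overline{a}}(H^2)$ carrying the norm $\|T_{\overline{a}}g\|_{\cH(\overline{b})}=\|g\|_{H^2}$ (injectivity of $T_{\overline{a}}$ coming from $a$ outer, exactly as in your uniqueness step). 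Your graph-space/reproducing-kernel argument replaces the operator-range machinery with the Moore--Aronszajn uniqueness theorem, at the modest cost of having to verify that $\pi_1(\cM)$ is an RKHS and to compute its kernel; the computation $(k_w^b)^+=\overline{b(w)}\,a k_w$ and the cancellation $-b(w)(T_{\overline{b}}f)(w)+b(w)(T_{\overline{a}}f^+)(w)=0$ are both correct, and the identification of $k_w^b=(I-T_bT_{\overline{b}})k_w$ as the kernel of the range space is consistent with the ``third approach'' the paper itself mentions. The only step you should not leave entirely implicit is that last identification: for a positive contraction $A$ the range space $\cM(A^{1/2})$ has reproducing kernel $Ak_w$ because $A^{1/2}k_w$ automatically lies in $\overline{\operatorname{ran}A^{1/2}}=(\ker A^{1/2})^{\perp}$, so the partial-isometry norm pairs it correctly; once that one-line lemma is recorded, your proof is complete and arguably cleaner than the standard one.
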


We end this section with an example that was studied in \cite{Sa97}. Let
\begin{equation}\label{E:b0}
b_0(z):=\frac{\tau z}{1-\tau^2 z},
\end{equation}
where $\tau:=(\sqrt{5}-1)/2$. 
The equivalence between (i) and (ii) in Theorem~\ref{T:nonextreme} shows that $b_0$ is non-extreme,
and a calculation shows that the function $a_0$ making
$(b_0,a_0)$ a pair is given by
\[
a_0(z)=\frac{\tau(1-z)}{1-\tau^2z}.
\]
It was shown in \cite{Sa97} that $b_0$ has the special property that
$\|f_r\|_{\cH(b_0)}\le\|f\|_{\cH(b_0)}$ for all $f\in\cH(b_0)$ and all $r<1$. 
Using a standard argument of reproducing kernel Hilbert spaces, it is easy to see that this implies that 
$\lim_{r\to1^-}\|f_r-f\|_{\cH(b_0)}=0$.
As we shall see in the next section,
this property is not shared by general $b$.

\section{Dilation in $\cH(b)$}\label{S:dilation}

Our principal goal in this section is to prove the following theorem.

\begin{theorem}\label{T:negative}
Let  $b:=b_0B^2$, where $b_0$ is the function given by \eqref{E:b0}, and $B$ is the Blaschke product
with zeros at $w_n:=1-4^{-n}~(n\ge1)$. 
Let $f(z):=\sum_{n\ge1}2^{-n}/(1- w_nz)$.
Then $b$ is non-extreme,  $f\in\cH(b)$, and we have 
\begin{equation}\label{E:negative}
\lim_{r\to 1^-}|(f_r)^+(0)|=\infty
\quad\text{and}\quad
\lim_{r\to1^-}\|f_r\|_{\cH(b)}=\infty.
\end{equation}
\end{theorem}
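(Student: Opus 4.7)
The plan is to exploit the factorization $b=b_0B^2$ at every step. First, to identify the pair $(b,a)$: since $|B|=1$ a.e.\ on $\TT$, we have $|b|=|b_0|$ there, hence $\log(1-|b|^2)=\log(1-|b_0|^2)\in L^1(\TT)$, and Theorem~\ref{T:nonextreme} yields the non-extremality of $b$. The outer function $a$ with $|a|^2+|b|^2=1$ on $\TT$ and $a(0)>0$ is uniquely determined, so $a=a_0$, and $(b,a_0)$ is the pair.

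Next, I would show $f\in\cH(b)$ with $f^+=0$. Because $b=b_0B^2$, we have $T_{\overline{b}}=T_{\overline{b_0}}T_{\overline{B^2}}$. Since $B(w_n)=0$, $T_{\overline{B}}k_{w_n}=\overline{B(w_n)}k_{w_n}=0$, and hence $T_{\overline{b}}k_{w_n}=0$ for every $n$. Summing (once $f\in H^2$ has been checked from its Taylor coefficients) gives $T_{\overline{b}}f=0=T_{\overline{a_0}}\cdot 0$; Theorem~\ref{T:f+} then delivers $f\in\cH(b)$ with $f^+=0$.

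The heart of the proof is to compute $(f_r)^+(0)$ and show it diverges. For $r<1$, $B(rw_n)\neq 0$. The $+$-transform in $\cH(b_0)$ of a Szeg\H{o} kernel is readily computed: from $T_{\overline{b_0}}k_w=\overline{b_0(w)}k_w$ and $T_{\overline{a_0}}k_w=\overline{a_0(w)}k_w$ one gets $(k_w)^+=(\overline{b_0(w)}/\overline{a_0(w)})k_w$, which for real $w\in(0,1)$ reduces to $\frac{w}{1-w}k_w$. Applying this term by term to $f_r=\sum_n 2^{-n}k_{rw_n}$, via $T_{\overline{b}}f_r=T_{\overline{a_0}}(f_r)^+$ and $T_{\overline{B^2}}k_{rw_n}=\overline{B(rw_n)^2}k_{rw_n}=B(rw_n)^2k_{rw_n}$ (the last equality because $w_n$ is real), yields
\[
(f_r)^+ \;=\; \sum_n 2^{-n}\,B(rw_n)^2\,\frac{rw_n}{1-rw_n}\,k_{rw_n},
\]
so that, evaluating at $z=0$,
\[
(f_r)^+(0) \;=\; \sum_n 2^{-n}\,B(rw_n)^2\,\frac{rw_n}{1-rw_n},
\]
a sum of \emph{positive} terms. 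Setting $\rho:=1-r$ and writing $B=\varphi_n\cdot B_n$ with $\varphi_n(z):=(z-w_n)/(1-w_nz)$ and $B_n:=B/\varphi_n$, one finds $|\varphi_n(rw_n)|\asymp\rho/(\rho+4^{-n})$, while $|B_n(rw_n)|$ remains bounded away from $0$ and $1$ uniformly in $n$, thanks to the lacunary spacing of the zeros. Combined with $1-rw_n\asymp\rho+4^{-n}$, each term is of order $2^{-n}\rho^2/(\rho+4^{-n})^3$; splitting the sum at $n^*:=\lfloor\log_4(1/\rho)\rfloor$, both the regime $4^{-n}\gg\rho$ (geometric, dominated by $n=n^*$) and the regime $4^{-n}\ll\rho$ contribute $\asymp\rho^{-1/2}$, so $(f_r)^+(0)\to\infty$ as $r\to1^-$.

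The second conclusion in \eqref{E:negative} is then immediate from the pointwise bound $|(f_r)^+(0)|\leq\|(f_r)^+\|_{H^2}$ combined with \eqref{E:f+}, giving $\|f_r\|_{\cH(b)}^2\geq|(f_r)^+(0)|^2\to\infty$. The main technical obstacle I foresee is the uniform lower bound on $|B_n(rw_n)|$: it requires a careful analysis of the product over $m\neq n$ of M\"obius factors at points near the $w_n$, and this is precisely where the rapid thinning $w_n=1-4^{-n}$ of the zeros is essential.
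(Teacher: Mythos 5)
Your setup---identifying $(b,a_0)$ as the pair via $|b|=|b_0|$ a.e.\ on $\TT$, showing $T_{\overline b}f=0$ so that $f\in\cH(b)$ with $f^+=0$, and deriving $(f_r)^+(0)=\sum_n2^{-n}B(rw_n)^2\,rw_n/(1-rw_n)$ together with the observation that the terms are non-negative---is exactly the paper's route (its Lemma~\ref{L:sumcnkwn}). The divergence argument is where you part company with the paper, and it is also where there is a genuine gap. The claim that $|B_n(rw_n)|$ is bounded away from $0$ uniformly in $n$ and $r$ is false: for any $m<n$, taking $r=w_m/w_n$ gives $rw_n=w_m$, so $\varphi_m(rw_n)=0$ and hence $B_n(rw_n)=0$; as $r$ increases to $1$ the point $rw_n$ sweeps past every zero $w_m$ with $m<n$, and $B_n(rw_n)$ vanishes each time. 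Consequently ``each term is of order $2^{-n}\rho^2/(\rho+4^{-n})^3$'' is only an \emph{upper} bound, and since what you need is a \emph{lower} bound on the sum, the argument as written does not close---in particular the near-dominant terms with $4^{-n}\asymp\rho$ can individually degenerate for unluckily placed $r$. The repair is to abandon the term-by-term two-sided estimate and bound a single, well-chosen term from below: given $r$, choose $n$ with $w_n\le r\le w_{n+1}$; then $rw_n\in[w_n^2,w_nw_{n+1}]\subset(w_{n-1},w_n)$, and one verifies that this point is pseudo-hyperbolically separated from \emph{all} the zeros (uniform separation of the lacunary sequence handles $w_k$ for $k\le n-2$ and $k\ge n+1$, and explicit computations using \eqref{E:bp} handle $w_{n-1}$ and $w_n$). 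That is precisely the content of the paper's Lemma~\ref{L:bp}, and this one term already yields $(f_r)^+(0)\ge C^2\,2^{-n}w_n^2/(1-w_n^2)\asymp(1-r)^{-1/2}$.

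A second, separate problem: you defer the verification that $f\in H^2$ to ``its Taylor coefficients'', but with the data as printed this check cannot be completed. Since $\|k_{w_n}\|_{H^2}=(1-w_n^2)^{-1/2}\asymp2^n$, the terms satisfy $2^{-n}\|k_{w_n}\|_{H^2}\asymp1$, and because all inner products $\langle k_{w_n},k_{w_m}\rangle_{H^2}$ are positive one gets $\|f\|_{H^2}^2\ge\sum_n4^{-n}\|k_{w_n}\|_{H^2}^2=\infty$. The same issue afflicts the paper's own invocation of Lemma~\ref{L:sumcnkwn}, whose hypothesis $\sum_n|c_n|(1-|w_n|)^{-1/2}<\infty$ reads $\sum_n1=\infty$ for $c_n=2^{-n}$, $w_n=1-4^{-n}$; the coefficients must be thinned slightly (e.g.\ $c_n=2^{-n}n^{-2}$, which satisfies the hypothesis while the key term $c_n/(1-w_n^2)\asymp2^n/n^2$ still blows up). So this is a defect of the stated data rather than of your strategy, but your parenthetical suggestion that the membership check is routine conceals the fact that, as stated, it fails.
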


Notice that, by Theorem~\ref{T:f+}, if $b$ is non-extreme and $f\in\cH(b)$, then
\[
\|f\|_{\cH(b)}\ge \|f^+\|_{H^2}\ge|f^+(0)|.
\]
Thus the second conclusion in \eqref{E:negative} is actually a consequence of the first. We shall therefore concentrate our attention on the first conclusion.

To simplify the notation in what follows, we shall write
$k_w(z):=1/(1-\overline{w}z)$, the Cauchy kernel.
It is the reproducing kernel for $H^2$ in the sense that
$f(w)=\langle f,k_w\rangle_{H^2}$ for all $f\in H^2$ and $w\in\DD$. In particular,
$\|k_w\|_{H^2}^2=\langle k_w,k_w\rangle_{H^2}=k_w(w)=1/(1-|w|^2)$.
We remark that $k_w$ has the useful property that
$T_{\overline{h}}(k_w)=\overline{h(w)}k_w$ for all $h\in H^\infty$. Indeed, given $g\in H^2$, we have
\[
\langle g, T_{\overline{h}}(k_w)\rangle_{H^2}
=\langle hg,k_w,\rangle_{H^2}
=h(w)g(w)
=h(w)\langle g,k_w\rangle_{H^2}
=\langle g,\overline{h(w)}k_w,\rangle_{H^2}.
\]

The proof of Theorem~\ref{T:negative} depends on two lemmas.
The first lemma provides a class of functions $f$ for which $(f_r)^+(0)$ is readily identifiable.

\begin{lemma}\label{L:sumcnkwn}
Let $b$ be  non-extreme, let $(b,a)$ be a pair and let $\phi:=b/a$. Let
\[
f:=\sum_{n\ge1} c_n k_{w_n},
\]
where $(w_n)_{n\ge1}$ are zeros of $b$ and  $(c_n)_{n\ge1}$ are complex numbers with 
$\sum_n|c_n|(1-|w_n|)^{-1/2}<\infty$.
Then $f\in\cH(b)$ and
\begin{equation}\label{E:sum}
(f_r)^+(0)=\sum_{n\ge1}c_n\overline{\phi(rw_n)}
\qquad(0<r<1).
\end{equation}
\end{lemma}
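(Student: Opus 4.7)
\smallskip

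\noindent\textbf{Proof plan.}
My overall strategy is to use the reproducing kernel identity $T_{\overline h}(k_w)=\overline{h(w)}k_w$ (recalled just before the lemma) together with the characterization of $\cH(b)$ given by Theorem~\ref{T:f+}. The structure of the proof is: first establish $H^2$-convergence of the relevant sums, then compute $T_{\overline b}f$ and $T_{\overline b}f_r$ termwise, and finally use uniqueness of $f^+$ to read off $(f_r)^+$.

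\emph{Step 1 (Convergence in $H^2$).} Since $\|k_w\|_{H^2}^2=1/(1-|w|^2)\le 1/(1-|w|)$, the hypothesis $\sum_n|c_n|(1-|w_n|)^{-1/2}<\infty$ forces $\sum_n c_n k_{w_n}$ to converge absolutely in $H^2$, so $f$ is a well-defined element of $H^2$.

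\emph{Step 2 ($f\in\cH(b)$).} Because each $w_n$ is a zero of $b$, the identity $T_{\overline b}(k_{w_n})=\overline{b(w_n)}k_{w_n}=0$ combined with continuity of $T_{\overline b}$ gives $T_{\overline b}f=0$. Hence $T_{\overline b}f=T_{\overline a}(0)$, so by Theorem~\ref{T:f+} we have $f\in\cH(b)$ and $f^+=0$.

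\emph{Step 3 (Computing $(f_r)^+$).} Fix $0<r<1$. Since $(k_w)_r=k_{rw}$, we have $f_r=\sum_n c_n k_{rw_n}$, and the same estimate as in Step~1 (using $1-|rw_n|\ge 1-|w_n|$) shows this converges absolutely in $H^2$. Applying $T_{\overline b}$ termwise gives
\[
T_{\overline b}f_r=\sum_{n\ge1}c_n\overline{b(rw_n)}k_{rw_n}.
\]
The outer function $a$ has no zeros in $\DD$, so $\phi=b/a$ is holomorphic on $\DD$ and, by compactness, bounded on the closed disk of radius $r$ by some $M_r<\infty$. Hence the series
\[
g_r:=\sum_{n\ge1}c_n\overline{\phi(rw_n)}k_{rw_n}
\]
also converges absolutely in $H^2$, and applying $T_{\overline a}$ termwise together with $\overline{a(rw_n)}\,\overline{\phi(rw_n)}=\overline{b(rw_n)}$ yields $T_{\overline a}g_r=T_{\overline b}f_r$. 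The uniqueness clause of Theorem~\ref{T:f+} therefore gives $(f_r)^+=g_r$.

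\emph{Step 4 (Evaluating at $0$).} Since $k_w(0)=1$ for every $w\in\DD$, evaluating the $H^2$-convergent series $g_r$ at $0$ (point evaluation is $H^2$-continuous) gives $(f_r)^+(0)=\sum_n c_n\overline{\phi(rw_n)}$, as claimed. No step is a serious obstacle; the only thing to be careful about is the termwise application of the bounded operators $T_{\overline a}$ and $T_{\overline b}$, which is justified by the absolute $H^2$-convergence established in Steps 1 and 3.
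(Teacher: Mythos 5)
Your proof is correct and follows essentially the same route as the paper's: show $T_{\overline b}f=0$ via $T_{\overline b}k_{w_n}=\overline{b(w_n)}k_{w_n}=0$, then identify $(f_r)^+$ as $\sum_n c_n\overline{\phi(rw_n)}k_{rw_n}$ by checking $T_{\overline b}(f_r)=T_{\overline a}(g_r)$ termwise. You merely spell out the "simple calculation" and the boundedness of $(\phi(rw_n))_n$ that the paper leaves implicit.
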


\begin{proof}
The series defining $f$ clearly converges absolutely in $H^2$.
Also, since 
$T_{\overline{b}}k_{w_n}=\overline{b(w_n)}k_{w_n}=0$ for all $n$, we have $T_{\overline{b}}f=0$, and consequently $f\in \cH(b)$ by Theorem~\ref{T:f+}.

Now fix $r\in(0,1)$ and consider
\[
g:=\sum_{n\ge1}c_n\overline{\phi(rw_n)}k_{w_n}.
\]
As $(\phi(rw_n))_{n\ge1}$ is a bounded sequence, this series also converges absolutely in $H^2$, and a 
simple calculation gives $T_{\overline{b}}(f_r)=T_{\overline{a}}(g_r)$. Thus $f_r\in\cH(b)$ and $(f_r)^+=g_r$. In particular \eqref{E:sum} holds.
\end{proof}

The second lemma is a technical result about Blaschke products.

\begin{lemma}\label{L:bp}
Let $B$ be an infinite Blaschke product whose zeros $(w_n)_{n\ge1}$ lie in $(0,1)$ and satisfy
\begin{equation}\label{E:bp}
0<\alpha\le\frac{1-w_{n+1}}{1-w_n}\le\beta <\frac{1}{2} \qquad(n\ge1).
\end{equation}
Then there exists a constant $C>0$ such that 
\[
|B(rw_n)|\ge C \qquad(w_n\le r\le w_{n+1},~n\ge1).
\]
\end{lemma}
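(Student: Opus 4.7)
Set $t_k:=1-w_k$ and $s_n:=1-rw_n$, so the hypothesis \eqref{E:bp} becomes $\alpha t_k\le t_{k+1}\le\beta t_k$, and a short calculation (using $\beta<1/2$) shows that $t_n\le s_n\le 2t_n$ for $r\in[w_n,w_{n+1}]$. Substituting $w_k=1-t_k$ and $rw_n=1-s_n$ into each factor of $B$ yields
\[
\biggl|\frac{w_k-rw_n}{1-w_k rw_n}\biggr|=\frac{|s_n-t_k|}{s_n+t_k-s_n t_k}.
\]
The plan is to show that $-\log|B(rw_n)|=\sum_k-\log\bigl|(w_k-rw_n)/(1-w_k rw_n)\bigr|$ is uniformly bounded above in $n$ and $r$.

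For indices $k$ far from $n$, I would combine the identity $1-|(w-z)/(1-wz)|^2=(1-w^2)(1-z^2)/(1-wz)^2$ with the geometric decay of the $t_k$ to prove
\[
1-\biggl|\frac{w_k-rw_n}{1-w_k rw_n}\biggr|^2\le C\beta^{|k-n|}\qquad(k\ne n),
\]
with $C$ depending only on $\alpha,\beta$. Choosing $N_0=N_0(\alpha,\beta)$ so that this right-hand side stays below $1/2$ whenever $|k-n|\ge N_0$, the elementary inequality $-\log(1-x)\le 2x$ converts the bound into $-\log|\text{factor}|^2\le 2C\beta^{|k-n|}$, and the tail sum converges geometrically to a constant.

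For the finitely many near indices $|k-n|<N_0$, each factor must be bounded below uniformly in $n$. Here the strict inequality $\beta<1/2$ is decisive: it guarantees quantitative separation of $s_n$ from every $t_{n+j}$, since $t_{n-1}\ge t_n/\beta>2t_n\ge s_n$ and $t_{n+1}\le\beta t_n<t_n/2\le s_n/2$, with analogous estimates for $|j|\ge 2$. Inserting these separations into the explicit formula above produces, for each fixed offset $j=k-n$, a positive lower bound on the factor depending only on $\alpha,\beta,j$; since only finitely many $j$ are involved, this yields a uniform lower bound. Combining the two regimes controls $-\log|B(rw_n)|$ uniformly for all $n\ge n_0$, and the remaining finitely many values of $n$ follow from a compactness argument, since the same separation shows that $rw_n$ is never a zero of $B$. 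The crux of the proof is thus the near-index analysis, where the hypothesis $\beta<1/2$ cannot be relaxed.
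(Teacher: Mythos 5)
Your overall strategy is sound, and your treatment of the tail factors is actually more self-contained than the paper's (which invokes the theorem that exponentially separated zeros are uniformly separated); your direct estimate $1-\rho(w_k,rw_n)^2\le (1-w_k^2)(1-(rw_n)^2)/(1-w_krw_n)^2\le 4\min(t_k,s_n)/\max(t_k,s_n)\lesssim\beta^{|k-n|}$ is correct and avoids that citation. But there is one concrete gap, and it occurs at the single most delicate factor: $k=n$. This is the only factor of the product that genuinely degenerates as $r\to1^-$ (since $\rho(w_n,rw_n)\to0$), so it is exactly where the upper constraint $r\le w_{n+1}$ must enter. Your near-index analysis rests on "quantitative separation of $s_n$ from every $t_{n+j}$", but the separations you actually establish ($t_{n-1}>2t_n\ge s_n$, $t_{n+1}<s_n/2$, and their analogues for $|j|\ge2$) only cover $j\ne 0$. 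For $j=0$ the two quantities live on the same scale ($t_n\le s_n\le 2t_n$), and from that sandwich alone the numerator $|s_n-t_n|$ could be arbitrarily small; nothing you state prevents the $k=n$ factor from collapsing. The repair is one line, but it needs the other half of hypothesis \eqref{E:bp}: $s_n-t_n=w_n(1-r)\ge w_n(1-w_{n+1})=w_nt_{n+1}\ge w_1\alpha\,t_n$, while the denominator $s_n+t_n-s_nt_n\le 3t_n$, so $\rho(w_n,rw_n)\ge w_1\alpha/3$. Note that this step uses $r\le w_{n+1}$ and the lower bound $\alpha$, not $\beta<\frac12$; your remark that "$\beta<1/2$ is decisive" is accurate for $j=\pm1$ but misattributes what saves the $j=0$ factor.

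With that factor supplied, your argument closes and is a legitimate alternative to the paper's. The paper instead bounds $\rho(w_k,rw_n)$ below by $\rho$ evaluated at the endpoints $w_n^2$ and $w_nw_{n+1}$ of the range of $rw_n$, splits off the two middle factors $\rho(w_{n-1},w_n^2)$ and $\rho(w_n,w_nw_{n+1})$ (the latter being precisely the $k=n$ estimate you are missing), and disposes of the two infinite tails by quoting uniform separation. Your version trades that citation for an explicit geometric-series bound on $-\log|B(rw_n)|$, which is arguably more elementary; the final compactness step for small $n$ is unnecessary, since your per-offset bounds are already uniform in $n$ (for small $n$ there are simply fewer near factors).
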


\begin{proof}
We have
\[
|B(rw_n)|=\prod_{k=1}^\infty \rho(w_k,rw_n),
\]
where $\rho$ denotes  the pseudo-hyperbolic metric on $\DD$, namely $\rho(z,w):=|z-w|/|1-\overline{w}z|$. 
The condition \eqref{E:bp} implies that 
$w_{n-1}<w_n$ for all $n$, and even that
$w_{n-1}<w_n^2$. Indeed, we have
\begin{equation}\label{E:beta}
1-w_n^2\le 2(1-w_n)\le 2\beta(1-w_{n-1})<1-w_{n-1}.
\end{equation}
It follows that, if $r\in[w_n,w_{n+1}]$, then
$rw_n\in[w_n^2,w_nw_{n+1}]\subset(w_{n-1},w_n)$, and 
consequently
\begin{equation}\label{E:prod4}
|B(rw_n)|\ge\Bigl(\prod_{k=1}^{n-2}\rho(w_k,w_{n-1})\Bigr)\times\rho(w_{n-1},w_n^2)\times\rho(w_n,w_nw_{n+1})\times\Bigl(\prod_{k=n+1}^\infty\rho(w_k,w_n)\Bigr).
\end{equation}
Thus the lemma will be proved if we can show that each of the four terms on the right-hand side of \eqref{E:prod4}
 is bounded below by a positive constant independent of $n$.

By \cite[Theorem~9.2]{Du00}, the condition \eqref{E:bp} implies that the sequence $(w_n)$ is uniformly separated, in other words, there exists a constant $C'>0$ such that
\[
\prod_{k\ne j}\rho(w_k,w_j)\ge C' \qquad(j\ge1).
\]
Applying this with $j=n-1$ and $j=n$ takes care of the first and fourth terms in \eqref{E:prod4}. 

For the second term in \eqref{E:prod4}, note that  \eqref{E:beta} gives
$w_n^2-w_{n-1}\ge (1-2\beta)(1-w_{n-1})$, and clearly also
$1-w_n^2w_{n-1}\le 1-w_{n-1}^2\le 2(1-w_{n-1})$,
whence
\[
\rho(w_{n-1},w_n^2)
=\frac{w_n^2-w_{n-1}}{1-w_n^2w_{n-1}}
\ge \frac{1-2\beta}{2}.
\]

Finally, for the third term in \eqref{E:prod4}, we observe that
$w_n-w_nw_{n+1}\ge w_1(1-w_{n+1})$ and also
$1-w_n^2w_{n+1}=(1-w_n)+(w_n-w_n^2)+(w_n^2-w_n^2w_{n+1})
\le 3(1-w_n)$, whence
\[
\rho(w_n,w_nw_{n+1})
=\frac{w_n-w_nw_{n+1}}{1-w_n^2w_{n+1}}
\ge \frac{w_1}{3}\frac{1-w_{n+1}}{1-w_n}\ge \frac{w_1}{3}\alpha.\qedhere
\]
\end{proof}

\begin{proof}[Proof of Theorem~\ref{T:negative}]
As remarked in \S\ref{S:background},
the function $b_0$ is  non-extreme and the function $a_0$ making $(b_0,a_0)$ a pair satisfies
$\phi_0:=b_0/a_0=z/(1-z)$.
As $b$ and $b_0$ have the same outer factors, 
it follows that
$b$ is non-extreme and
the function $a$ making $(b,a)$ a pair is just $a_0$. 
Hence $\phi:=b/a=B^2b_0/a_0=B^2\phi_0$.

By Lemma~\ref{L:sumcnkwn}, we have $f\in\cH(b)$. The lemma also gives that
\[
(f_r)^+(0)=\sum_{n\ge1}2^{-n}\overline{\phi(rw_n)}=\sum_{n\ge1}2^{-n}B(rw_n)^2\frac{rw_n}{1-rw_n}.
\] 
As the terms in this series are non-negative,
each one of them provides a lower bound for the sum. Given $r\in [w_1,1)$, we choose $n$ so that
$w_n\le r\le w_{n+1}$. By Lemma~\ref{L:bp} we have $|B(rw_n)|\ge C>0$, where $C$ is a constant independent
of $r$ and $n$. Thus 
\[
(f_r)^+(0)\ge 2^{-n}C^2\frac{rw_n}{1-rw_n}\ge 2^{-n}C^2\frac{w_n^2}{1-w_n^2}\asymp 2^n\asymp (1-r)^{-1/2}.
\]
In particular $(f_r)^+(0)\to\infty$ as $r\to1^-$,
as claimed. Finally,
as already remarked, this implies that
$\|f_r\|_{\cH(b)}\to\infty$ as $r\to1^-$.
\end{proof}

We now present some consequences of this result.

\begin{corollary}
Let $b,f$ be as in Theorem~\ref{T:negative}.
Then the Taylor partial sums $s_n(f)$ of $f$ and their 
Ces\`aro means $\sigma_n(f)$ satisfy
\[
\limsup_{n\to\infty}\|s_n(f)\|_{\cH(b)}=\infty
\quad\text{and}\quad
\limsup_{n\to\infty}\|\sigma_n(f)\|_{\cH(b)}=\infty.
\]
\end{corollary}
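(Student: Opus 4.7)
The strategy is a contradiction argument with Theorem~\ref{T:negative}. Suppose $\sup_n \|s_n(f)\|_{\cH(b)} \le M < \infty$; I want to derive the contradicting statement $\sup_{r<1} \|f_r\|_{\cH(b)} \le M$. The bridge is the Abel summation identity
\[
s_N(f_r) = r^N s_N(f) + (1-r) \sum_{k=0}^{N-1} r^k s_k(f),
\]
which is a polynomial identity and hence valid in $\cH(b)$. Its coefficients are non-negative and sum to $1$, so the triangle inequality immediately gives $\|s_N(f_r)\|_{\cH(b)} \le M$ for every $N$ and every $r \in (0,1)$.

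To convert this into a bound on $f_r$ itself, I would pass to the limit $N \to \infty$. Because $f \in H^2$ is holomorphic on $\DD$, the dilate $f_r$ is holomorphic on $\{|z|<1/r\}$, so its Taylor polynomials $s_N(f_r)$ converge to $f_r$ uniformly on $\overline{\DD}$, in particular pointwise on $\DD$. Since the sequence $(s_N(f_r))_N$ is bounded in the Hilbert space $\cH(b)$, it has a weakly convergent subsequence; continuity of point evaluation on the reproducing kernel space $\cH(b)$ identifies its weak limit with $f_r$, and weak lower semicontinuity of the norm then yields $\|f_r\|_{\cH(b)} \le M$, independent of $r$. This contradicts \eqref{E:negative}.

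For the Cesàro means I would run the same scheme starting from the well-known identity $f_r = (1-r)^2 \sum_{n \ge 0}(n+1) r^n \sigma_n(f)$ and truncating at $N$. The coefficients $(1-r)^2 (n+1) r^n$ are again non-negative with total mass at most~$1$, so the hypothesis $\sup_n \|\sigma_n(f)\|_{\cH(b)} \le L < \infty$ forces the polynomial truncations $(1-r)^2 \sum_{n=0}^N (n+1) r^n \sigma_n(f)$ to be uniformly bounded in $\cH(b)$. A short Abel rearrangement shows that these truncations converge in $H^2$ (and pointwise on $\DD$) to $f_r$ as $N \to \infty$, and the weak-limit argument from the first case applies verbatim to give $\|f_r\|_{\cH(b)} \le L$, again contradicting \eqref{E:negative}.

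The only non-routine step I anticipate is the weak-limit passage: one has merely pointwise convergence of the polynomial approximants together with a uniform $\cH(b)$-bound, and no a priori $\cH(b)$-convergence. However, Hilbert space weak compactness combined with the reproducing kernel property of $\cH(b)$ closes the gap in a standard way, so I do not expect a genuine obstacle.
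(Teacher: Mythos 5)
Your proof is correct and follows essentially the same route as the paper, which deduces the corollary from the identities $f_r=\sum_{n\ge0}(1-r)r^n s_n(f)$ and $f_r=\sum_{n\ge0}(n+1)(1-r)^2r^n\sigma_n(f)$, whose coefficients are non-negative and sum to $1$. Your weak-compactness/reproducing-kernel passage to the limit is valid but can be shortcut: under the assumed uniform bound the series converge absolutely in $\cH(b)$, and since $\cH(b)$-convergence implies $H^2$-convergence the sum must be $f_r$, giving $\|f_r\|_{\cH(b)}\le\sup_n\|s_n(f)\|_{\cH(b)}$ directly.
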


\begin{proof}
This follows immediately from Theorem~\ref{T:negative}
and the elementary identities
\[
f_r=\sum_{n\ge0}(1-r)r^n s_n(f)
\quad\text{and}\quad
f_r=\sum_{n\ge0}(n+1)(1-r)^2r^n\sigma_n(f).\qedhere
\]
\end{proof}

Let $b$ be non-extreme, let $(b,a)$ be a pair and let $\phi:=b/a$, say $\phi(z)=\sum_{j\ge0}\hat{\phi}(j)z^j$. 
It was shown in \cite[Theorem~4.1]{CGR10}  that,
if $f$ is holomorphic in a neighborhood of $\overline{\DD}$,
say $f(z)=\sum_{k\ge0}\hat{f}(k)z^k$, then the series 
$\sum_{j\ge0}\hat{f}(j+k)\overline{\hat{\phi}(j)}$ converges absolutely  for each $k$, and 
\begin{equation}\label{E:hbnorm}
\|f\|_{\cH(b)}^2=\sum_{k\ge0}|\hat{f}(k)|^2
+\sum_{k\ge0}\Bigl|\sum_{j\ge0}\hat{f}(j+k)\overline{\hat{\phi}(j)}\Bigr|^2.
\end{equation}
It was left open whether the same formula holds for all $f\in\cH(b)$. Using Theorem~\ref{T:negative},
we can now show that it does not.

\begin{corollary}
Let $b,f$ be as in Theorem~\ref{T:negative}. 
Then $\sum_{j\ge0}\hat{f}(j)\overline{\hat{\phi}(j)}$ diverges.
\end{corollary}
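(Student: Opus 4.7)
The plan is to leverage the blow-up established in Theorem~\ref{T:negative} via an Abel-summation argument.

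The first step is to express $(f_r)^+(0)$ as the $r$-Abel sum of $\sum_{j\ge 0}\hat f(j)\overline{\hat\phi(j)}$. For each $r\in(0,1)$, the dilate $f_r$ is holomorphic in a neighborhood of $\overline{\DD}$, so formula \eqref{E:hbnorm} of \cite{CGR10} applies to $f_r$. Tracing through the proof there, one actually obtains the term-by-term identification
\[
\widehat{f^+}(k)=\sum_{j\ge 0}\hat f(j+k)\,\overline{\hat\phi(j)},
\]
not merely that its squared modulus appears in \eqref{E:hbnorm}. Specializing to $k=0$ and to $f_r$, and using $\widehat{f_r}(j)=\hat f(j)r^j$, I obtain the key identity
\[
(f_r)^+(0)=\sum_{j\ge 0}\hat f(j)\,\overline{\hat\phi(j)}\,r^j.
\]

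The second step is a contradiction argument. Suppose that $\sum_{j\ge 0}\hat f(j)\overline{\hat\phi(j)}$ converges. The classical Abel theorem on power series then gives
\[
\lim_{r\to 1^-}(f_r)^+(0)=\sum_{j\ge 0}\hat f(j)\,\overline{\hat\phi(j)},
\]
a finite value. But Theorem~\ref{T:negative} asserts that $|(f_r)^+(0)|\to\infty$ as $r\to 1^-$. This contradiction forces the series to diverge.

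The main obstacle is justifying the exact Fourier-coefficient identification $\widehat{f^+}(k)=\sum_j\hat f(j+k)\overline{\hat\phi(j)}$, since \eqref{E:hbnorm} as quoted only controls the sum of the squared magnitudes, not the coefficients themselves. To extract the actual coefficients, I would equate Fourier coefficients in the defining identity $T_{\overline a}f^+=T_{\overline b}f$, rewrite $\overline b$ using $\phi=b/a$, and invoke injectivity of $T_{\overline a}$ (which holds because $a$ is outer) to recover $\widehat{f^+}(k)$ term by term. For the analytic $f_r$ all the relevant double series converge absolutely, so the manipulation is routine; this is precisely what is done inside the proof of \eqref{E:hbnorm} in \cite{CGR10}.
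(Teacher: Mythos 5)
Your proposal is correct and follows essentially the same route as the paper: identify $(f_r)^+(0)$ with the Abel means $\sum_{j\ge0}r^j\hat f(j)\overline{\hat\phi(j)}$ via the argument of \cite[Theorem~4.1]{CGR10} applied to the dilate $f_r$, then invoke Abel's theorem together with the blow-up $(f_r)^+(0)\to\infty$ from Theorem~\ref{T:negative}. Your extra care in justifying the coefficient identity $\widehat{f^+}(k)=\sum_j\hat f(j+k)\overline{\hat\phi(j)}$ from $T_{\overline a}f^+=T_{\overline b}f$ and the injectivity of $T_{\overline a}$ is a sound filling-in of a step the paper delegates to \cite{CGR10}.
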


\begin{proof}
For $r\in(0,1)$, the dilated function $f_r$ is holomorphic in a neighborhood of $\overline{\DD}$, and the argument in \cite{CGR10} that establishes the formula \eqref{E:hbnorm}
shows that $(f_r)^+(0)=\sum_{j\ge0}r^j\hat{f}(j)\overline{\hat{\phi}(j)}$. If $b,f$ are as in Theorem~\ref{T:negative} then $(f_r)^+(0)\to\infty$ as $r\to 1^-$,
in other words, $\lim_{r\to 1^-}\sum_{j\ge0}r^j\hat{f}(j)\overline{\hat{\phi}(j)}=\infty$. By Abel's theorem, it follows that the series
$\sum_{j\ge0}\hat{f}(j)\overline{\hat{\phi}(j)}$ diverges.
\end{proof}

In  Theorem~\ref{T:negative}, we chose $b_0$ so as to have a simple concrete example. With a more astute choice, we can prove more, obtaining examples where $\|f_r\|_{\cH(b)}$ grows `fast'. There is a limit on how fast it can grow:
it was shown in \cite[Theorem~5.2]{CGR10} that, if $b$ is non-extreme and $f\in\cH(b)$, then
$\log^+\|f_r\|_{\cH(b)}=o((1-r)^{-1})$
as $r\to 1^-$. 
We now prove that this estimate is sharp.

\begin{theorem}
Let $\gamma:(0,1)\to(1,\infty)$ be a function such that $\log\gamma(r)=o((1-r)^{-1})$.
Then there exist $b$ non-extreme and $f\in \cH(b)$ such that 
$\|f_r\|_{\cH(b)}\ge\gamma(r)$ for all $r$ in some interval $(r_0,1)$.
\end{theorem}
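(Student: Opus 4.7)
The plan is to refine the construction of Theorem~\ref{T:negative}, choosing the seed function of the pair to boost the growth to the prescribed rate. Write $\psi:=\log\gamma$; by hypothesis $\psi(r)(1-r)\to 0$. Assume WLOG that $\psi$ is non-decreasing, continuous, and tends to $+\infty$ (otherwise Theorem~\ref{T:negative} itself supplies an $f$ whose dilate norms tend to $\infty$, dominating $\gamma$). Retain the overall scheme: take $b := b_0 B^2$ with $B$ the Blaschke product having simple real zeros $w_n=1-t_n$ satisfying the Blaschke ratio hypothesis of Lemma~\ref{L:bp}, and set $f := \sum_n c_n k_{w_n}$ with $c_n>0$ and $\sum c_n/\sqrt{t_n}<\infty$. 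By Lemma~\ref{L:sumcnkwn}, $f\in\cH(b)$ and, writing $\phi_0:=b_0/a_0$,
\[
(f_r)^+(0) = \sum_n c_n \,B(rw_n)^2 \,\phi_0(rw_n),
\]
with all terms positive if $b_0$ is chosen real and positive on the real axis. Lemma~\ref{L:bp} gives $|B(rw_n)|\ge C$ on $[w_n,w_{n+1}]$, so the $n$-th term alone yields $(f_r)^+(0)\ge C^2 c_n \phi_0(rw_n)$ on that interval.

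The new ingredient, relative to Theorem~\ref{T:negative}, is the design of the seed $b_0$ so that $\phi_0$ grows at a rate dictated by $\gamma$. Take $b_0$ outer with prescribed boundary modulus
\[
\log|\phi_0(\zeta)| = \sum_n h_n \,\mathbf{1}_{I_n}(\zeta),
\]
where $I_n$ is an arc centered at $\zeta=1$ of length comparable to $t_n$ and $h_n\asymp\psi(w_{n+1})+\log(1/c_n)$. A direct Poisson-kernel estimate shows that $\log|\phi_0(s)|\ge h_n/2$ on the set $\{|1-s|\lesssim t_n\}$, which contains the orbit $s=rw_n$ whenever $r\in[w_n,w_{n+1}]$. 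Consequently $C^2 c_n\phi_0(rw_n)\ge\gamma(w_{n+1})\ge\gamma(r)$ on $[w_n,w_{n+1}]$, so that $\|f_r\|_{\cH(b)}\ge|(f_r)^+(0)|\ge\gamma(r)$ on $[w_1,1)$.

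The main obstacle is the admissibility of this choice of $b_0$: the non-extremality of $b$ demands $\log|a_0|\in L^1(\TT)$, equivalently $\log(1+|\phi_0|^2)\in L^1(\TT)$, which with our prescription reduces essentially to the summability $\sum\psi(w_{n+1})t_n<\infty$. The hypothesis $\psi(r)(1-r)\to 0$ only supplies the termwise fact $\psi(w_n)t_n\to 0$, which is strictly weaker. The remedy is to choose the sequence $(t_n)$ adaptively from $\gamma$: at each step, select $t_{n+1}$ in the admissible Blaschke window small enough to guarantee $\psi(1-t_{n+1})t_n\le 2^{-n}$. Since $\psi(r)(1-r)\to 0$, such a choice is always available, possibly at the cost of letting the lower ratio $t_{n+1}/t_n$ decay; a correspondingly relaxed form of Lemma~\ref{L:bp} (giving $|B(rw_n)|$ a lower bound depending on $t_{n+1}/t_n$) can be substituted, with the loss absorbed into a slightly smaller choice of $c_n$. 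Once this adaptive sequence $(t_n)$ is fixed, the remaining estimates proceed exactly as in the proof of Theorem~\ref{T:negative}.
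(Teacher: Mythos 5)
Your overall strategy coincides with the paper's: keep $b=b_0B^2$ and $f=\sum_n c_nk_{w_n}$, and redesign the seed so that $\phi_0=b_0/a_0$ grows fast enough along the points $rw_n$. (The paper does this by taking for $\phi_0$ an arbitrary positive increasing function of class $N^+$ with sufficient radial growth, keeping the fixed data $w_n=1-4^{-n}$, $c_n=2^{-n}$.) The gap in your write-up lies in the one step that carries all the content: producing an \emph{admissible} $\phi_0$, i.e.\ one with $\log(1+|\phi_0|^2)\in L^1(\TT)$ so that $b$ stays non-extreme. Your prescription $\log|\phi_0|=\sum_n h_n\mathbf{1}_{I_n}$ with $h_n\asymp\psi(w_{n+1})+\log(1/c_n)$ makes each arc $I_n$ individually responsible for the growth at scale $t_n$, and, as you correctly observe, this forces $\sum_n\psi(w_{n+1})t_n<\infty$, which does not follow from $t\,\psi(1-t)\to0$. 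But this obstruction is an artifact of your construction, not of the problem. For nonnegative $u\in L^1(\TT)$ the Poisson integral satisfies $P[u](1-t)\ge \frac{c}{t}\int_{|\theta|\le t}u\,d\theta$, so mass placed at scale $\delta$ contributes $\sim t^{-1}$ at \emph{every} scale $t\ge\delta$, not only at $t\asymp\delta$. Setting $V(t):=\sup_{s\le t}s\,\psi(1-s)$ (non-decreasing, $V(0^+)=0$), it suffices to find $u\in L^1$ with $\int_{|\theta|\le t}u\,d\theta\ge V(t)$, and this is immediate: take $u=\sum_n 2^{-n}\delta_n^{-1}\mathbf{1}_{\{\delta_n\le|\theta|\le 2\delta_n\}}$ with $\delta_n$ chosen so that $V(2\delta_n)\le 2^{-n}$, plus a harmless $O(\log(1/|\theta|))$ summand to cover the $\log(1/c_n)=n\log 2$ term. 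With this outer $\phi_0$, the fixed geometric data $t_n=4^{-n}$, $c_n=2^{-n}$ of Theorem~\ref{T:negative} work unchanged and no adaptive choice of zeros is needed.

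Your proposed repair --- choosing $t_{n+1}$ adaptively so that $\psi(1-t_{n+1})t_n\le 2^{-n}$ --- is therefore unnecessary, and as written it is not carried through. Once the lower ratio bound in \eqref{E:bp} is abandoned, the factor of Lemma~\ref{L:bp} that degenerates is $\rho(w_n,w_nw_{n+1})\ge\frac{w_1}{3}\,\frac{t_{n+1}}{t_n}$, so the relaxed lemma gives only $|B(rw_n)|\gtrsim t_{n+1}/t_n$. To recover $(f_r)^+(0)\ge\gamma(r)$ you must then make $c_n\phi_0(rw_n)$ \emph{larger} by a factor $(t_n/t_{n+1})^2$; the loss cannot be ``absorbed into a slightly smaller choice of $c_n$'' --- that goes in the wrong direction. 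It has to be pushed into $h_n$, which adds $2t_n\log(t_n/t_{n+1})$ to the $L^1$ budget, and you would then need to verify simultaneously that (i) at each step some $t_{n+1}\le\beta t_n$ with $\psi(1-t_{n+1})t_n\le 2^{-n}$ exists, and (ii) $\sum_n t_n\log(t_n/t_{n+1})<\infty$ for the resulting sequence. Neither is checked, and (ii) is not obvious when $\psi$ forces $t_{n+1}$ to be extremely small. I recommend discarding the adaptive scheme in favour of the cumulative Poisson estimate above, after which the rest of your argument does reduce to the proof of Theorem~\ref{T:negative} as claimed.
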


\begin{proof}

Let $\phi_1$ be any function in the Smirnov class $N^+$ that is positive and increasing on $(0,1)$. To say that $\phi_1\in N^+$ means we can write $\phi_1=b_1/a_1$, where $a_1,b_1\in H^\infty$ and $a_1$ is outer. Multiplying $a_1$ and $b_1$ by an appropriately chosen outer function, we may further ensure that $|a|^2+|b|^2=1$ a.e.\ on $\TT$ and that $a_1(0)>0$, in other words,
that $(b_1,a_1)$ is a pair.
Repeating the proof of Theorem~\ref{T:negative} with $b_0$ replaced by $b_1$ (but with the same $B$),
we obtain $f\in\cH(b)$ such that
\[
(f_r)^+(0)\ge C^2(1-r)^{1/2}\phi_1(8r-7)
\qquad(3/4<r<1).
\]
Since $\log\gamma(r)=o((1-r)^{-1})$, it is possible to choose $\phi_1$ so that right-hand side  exceeds $\gamma(r)$ for all $r$ sufficiently close to $1$. For these $r$, we therefore have $\|f_r\|_{\cH(b)}\ge (f_r)^+(0)\ge \gamma(r)$.
\end{proof}

\section{Polynomial approximation in $\cH(b)$}\label{S:polyapprox}

In this section we present a recipe for polynomial approximation in $\cH(b)$ when $b$ is non-extreme. It is based upon three lemmas.

\begin{lemma}\label{L:Toeplitzpoly}
Let $h\in H^\infty$ and let $p$ be a polynomial. Then $T_{\overline{h}}p$ is a polynomial.
\end{lemma}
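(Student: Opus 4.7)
The plan is to reduce immediately to the case of a monomial, then compute the projection of $\overline{h}z^k$ onto $H^2$ directly from the Fourier expansion.

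First, by linearity of $T_{\overline{h}}$, it suffices to show that $T_{\overline{h}}(z^k)$ is a polynomial for every integer $k \ge 0$, since a general polynomial $p=\sum_{k=0}^n a_k z^k$ will then be sent to a finite linear combination of polynomials.

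Next, write the Taylor expansion $h(z)=\sum_{j\ge 0}\hat{h}(j)z^j$, which converges in $H^2$ since $h\in H^\infty\subset H^2$. On the unit circle this gives the $L^2$-convergent expansion $\overline{h(e^{i\theta})}=\sum_{j\ge 0}\overline{\hat{h}(j)}e^{-ij\theta}$, so that
\[
\overline{h(e^{i\theta})}\,e^{ik\theta}=\sum_{j\ge 0}\overline{\hat{h}(j)}\,e^{i(k-j)\theta}
\]
in $L^2(\TT)$. The orthogonal projection $P_+$ onto $H^2$ simply retains the Fourier modes with non-negative index, i.e.\ the terms with $k-j\ge 0$. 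Therefore
\[
T_{\overline{h}}(z^k)=P_+\bigl(\overline{h}\,z^k\bigr)=\sum_{j=0}^{k}\overline{\hat{h}(j)}\,z^{k-j},
\]
which is a polynomial of degree at most $k$.

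Consequently $T_{\overline{h}}p$ is a polynomial of degree at most $\deg p$. There is no real obstacle here; the only point requiring a sentence of justification is the passage from the (possibly infinite) Taylor series of $h$ to the truncated finite sum for $T_{\overline{h}}(z^k)$, which is immediate from the orthogonality of the characters $\{e^{in\theta}\}_{n\in\ZZ}$ in $L^2(\TT)$.
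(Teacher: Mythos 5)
Your proof is correct and rests on the same underlying fact as the paper's one-line argument, namely that $P_+$ annihilates the negative Fourier modes, so that every Fourier coefficient of $T_{\overline{h}}p$ of index exceeding $\deg p$ vanishes; the paper phrases this as $\langle T_{\overline{h}}p,z^k\rangle_{H^2}=\langle p,z^kh\rangle_{H^2}=0$ for $k>\deg p$, while you carry out the equivalent explicit computation on monomials. Your version has the minor added benefit of producing the explicit formula $T_{\overline{h}}(z^k)=\sum_{j=0}^{k}\overline{\hat{h}(j)}z^{k-j}$, but the content is the same.
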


\begin{proof}
If $k$ is strictly larger than the degree of $p$, then 
$\langle T_{\overline{h}}p,z^k\rangle_{H^2}=\langle p,z^kh\rangle_{H^2}=0$.
\end{proof}

\begin{lemma}\label{L:harmonic}
Let $h\in H^\infty$ with $\|h\|_{H^\infty}\le1$. Then,
for all $g\in H^\infty$, we have
\[
\|T_{\overline{h}}g-g\|_{H^2}^2\le 2(1-\Re h(0))\|g\|_\infty^2.
\]
\end{lemma}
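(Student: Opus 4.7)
The plan is to rewrite $T_{\overline h}g-g$ as a Riesz projection of an explicit $L^2(\TT)$ function and then compute. Since $g\in H^2$, we have $g=P_+g$, and therefore
\[
T_{\overline h}g-g=P_+(\overline h g)-P_+(g)=P_+\bigl((\overline h-1)g\bigr).
\]
Because $P_+$ is a contraction from $L^2(\TT)$ to $H^2$, this gives the bound
\[
\|T_{\overline h}g-g\|_{H^2}^2\le \|(\overline h-1)g\|_{L^2(\TT)}^2\le \|g\|_\infty^2\int_\TT |h-1|^2\,dm.
\]

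Next I would estimate $\int_\TT|h-1|^2\,dm$. Expanding gives
\[
\int_\TT |h-1|^2\,dm=\int_\TT (|h|^2-2\Re h+1)\,dm,
\]
and using $|h|\le 1$ a.e.\ on $\TT$ together with the mean-value property $\int_\TT h\,dm=h(0)$, one obtains
\[
\int_\TT |h-1|^2\,dm\le \int_\TT (2-2\Re h)\,dm=2\bigl(1-\Re h(0)\bigr).
\]
Combining the two displays yields the required inequality.

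There is no real obstacle here: the argument is just the observation that the Riesz projection is a contraction, combined with the elementary pointwise bound $|h-1|^2\le 2(1-\Re h)$ valid whenever $|h|\le 1$, and the mean-value property of bounded holomorphic functions. The only thing to double-check is that the right-hand side is nonnegative, which it is, since $|h(0)|\le 1$ forces $\Re h(0)\le 1$.
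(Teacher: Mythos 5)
Your proof is correct and rests on the same three ingredients as the paper's: a contractivity estimate (you use that $P_+$ is a contraction on $L^2(\TT)$ where the paper uses $\|T_{\overline h}g\|_{H^2}\le\|g\|_{H^2}$), the pointwise consequence of $|h|\le 1$ a.e.\ on $\TT$, and the mean-value identity $\int_\TT \Re h\,dm=\Re h(0)$. The only cosmetic difference is that you bound $|h-1|^2\le 2(1-\Re h)$ pointwise after pulling out $\|g\|_\infty^2$, whereas the paper expands the inner product and keeps $|g|^2$ inside the integral against $1-\Re h$; the two computations are essentially identical.
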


\begin{proof}
Expanding the left-hand side, we obtain
\begin{align*}
\|T_{\overline{h}}g-g\|_{H^2}^2
&=\|T_{\overline{h}}g\|_{H^2}^2+\|g\|_{H^2}^2-2\Re\langle T_{\overline{h}}g,g\rangle_{H^2}\\
&\le 2\|g\|_{H^2}^2-2\Re\langle g,hg\rangle_{H^2}\\
&=2\int_0^{2\pi}|g(e^{i\theta})|^2\bigl(1-\Re h(e^{i\theta})\bigr)\,\frac{d\theta}{2\pi}\\
&\le2\|g\|_{H^\infty}^2\int_0^{2\pi}\bigl(1-\Re h(e^{i\theta})\bigr)\,\frac{d\theta}{2\pi}\\
&=2\|g\|_{H^\infty}^2(1-\Re h(0)).\qedhere
\end{align*}
\end{proof}

\begin{lemma}\label{L:H2Hb}
Let $b$ be non-extreme and let $(b,a)$ be a pair. 
If $h\in aH^\infty$, then
$T_{\overline{h}}$ is a bounded operator from $H^2$ into $\cH(b)$ and $\|T_{\overline{h}}\|_{H^2\to\cH(b)}\le\|h/a\|_{H^\infty}$.
\end{lemma}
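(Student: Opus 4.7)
My plan is to use Theorem~\ref{T:f+} to identify $(T_{\overline h}f)^+$ explicitly, then exploit the pair condition $|a|^2+|b|^2=1$ a.e.\ on $\TT$ to get the sharp constant.

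First, since $h\in aH^\infty$, I would write $h=ak$ with $k:=h/a\in H^\infty$, so $\|k\|_{H^\infty}=\|h/a\|_{H^\infty}$. Fix $f\in H^2$. Using the commutation rule $T_{\overline{h_1}}T_{\overline{h_2}}=T_{\overline{h_1h_2}}$ noted in \S\ref{S:background}, I compute
\[
T_{\overline b}T_{\overline h}f=T_{\overline{bh}}f=T_{\overline{abk}}f=T_{\overline a}T_{\overline{bk}}f.
\]
By Theorem~\ref{T:f+}, this shows $T_{\overline h}f\in\cH(b)$, with $(T_{\overline h}f)^+=T_{\overline{bk}}f$, and
\[
\|T_{\overline h}f\|_{\cH(b)}^2
=\|T_{\overline h}f\|_{H^2}^2+\|T_{\overline{bk}}f\|_{H^2}^2.
\]

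Second, I would bound each summand by dropping the projection $P_+$: since $P_+:L^2(\TT)\to H^2$ is an orthogonal projection,
\[
\|T_{\overline h}f\|_{H^2}^2+\|T_{\overline{bk}}f\|_{H^2}^2
\le\|\overline h f\|_{L^2(\TT)}^2+\|\overline{bk}f\|_{L^2(\TT)}^2
=\int_\TT\bigl(|h|^2+|bk|^2\bigr)|f|^2\,\frac{d\theta}{2\pi}.
\]
Now comes the key step that yields the \emph{sharp} constant: since $(b,a)$ is a pair, $|h|^2+|bk|^2=|k|^2(|a|^2+|b|^2)=|k|^2$ a.e.\ on $\TT$. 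Hence the integral equals $\int_\TT |k|^2|f|^2\le\|k\|_{H^\infty}^2\|f\|_{H^2}^2$, and the lemma follows.

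The main conceptual obstacle is getting the constant $\|h/a\|_{H^\infty}$ rather than something like $\sqrt{2}\,\|h/a\|_{H^\infty}$: bounding the two summands separately via operator norms of Toeplitz operators would give the weaker constant, because $\|T_{\overline h}\|$ and $\|T_{\overline{bk}}\|$ are each as large as $\|k\|_{H^\infty}$. The trick is to recognize that these two terms cannot be simultaneously extremal, and that this is captured cleanly by the pointwise identity $|h|^2+|bk|^2=|k|^2$ on $\TT$, which relies essentially on the pair relation $|a|^2+|b|^2=1$.
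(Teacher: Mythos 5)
Your proof is correct and follows essentially the same route as the paper's: write $h=ak$ with $k=h/a\in H^\infty$, identify $(T_{\overline h}f)^+=T_{\overline{bk}}f$ via the commutation of co-analytic Toeplitz operators and Theorem~\ref{T:f+}, and control the resulting two $H^2$ norms using the contractivity of $P_+$ together with the pair relation $|a|^2+|b|^2=1$. The only cosmetic difference is that the paper pulls out $\|k\|_{H^\infty}$ as an operator-norm factor before summing $\|T_{\overline a}f\|_{H^2}^2+\|T_{\overline b}f\|_{H^2}^2\le\|f\|_{H^2}^2$, whereas you absorb $|k|^2$ inside a single integral; both give the same sharp constant.
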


\begin{proof}
Let $h=ah_0$, where $h_0\in H^\infty$. For $f\in H^2$, we have
\[
T_{\overline{b}}T_{\overline{h}}f
=T_{\overline{b}}T_{\overline{a}}T_{\overline{h}_0}f
=T_{\overline{a}}T_{\overline{h}_0}T_{\overline{b}}f
\]
so, by Theorem~\ref{T:f+}, $T_{\overline{h}}f\in\cH(b)$ and
$(T_{\overline{h}}f)^+=T_{\overline{h}_0}T_{\overline{b}}f$.
Consequently
\begin{align*}
\|T_{\overline{h}}f\|_{\cH(b)}^2
&=\|T_{\overline{h}}f\|_{H^2}^2
+\|T_{\overline{h}_0}T_{\overline{b}}f\|_{H^2}^2\\
&=\|T_{\overline{h}_0}T_{\overline{a}}f\|_{H^2}^2
+\|T_{\overline{h}_0}T_{\overline{b}}f\|_{H^2}^2\\
&\le\|h_0\|_{H^\infty}^2(\|T_{\overline{a}}f\|_{H^2}^2
+\|T_{\overline{b}}f\|_{H^2}^2)\\
&\le\|h_0\|_{H^\infty}^2\|f\|_{H^2}^2,
\end{align*}
the last inequality coming from the fact that $|a|^2+|b|^2=1$
a.e.\ on $\TT$, since $(b,a)$ is a pair.
\end{proof}

We now put these results together to produce a new, constructive proof of the following result.

\begin{theorem}\label{T:polysdense}
If $b$ is non-extreme, then polynomials are dense in $\cH(b)$.
\end{theorem}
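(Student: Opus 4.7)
The plan is to use Toeplitz operators $T_{\overline{h}}$ with $h\in aH^\infty$ as an approximate identity on $\cH(b)$ that simultaneously preserves the property of being a polynomial. Given $f\in\cH(b)$, I will construct a sequence $h_n\in aH^\infty$ with $\|h_n\|_{H^\infty}\le 1$ and $h_n(0)\to 1$, show that $T_{\overline{h_n}}f\to f$ in $\cH(b)$, and then for each fixed $n$ approximate $T_{\overline{h_n}}f$ in $\cH(b)$ by polynomials of the form $T_{\overline{h_n}}p_m$, where $p_m\to f$ in $H^2$. Lemma~\ref{L:Toeplitzpoly} ensures that $T_{\overline{h_n}}p_m$ is a polynomial, and Lemma~\ref{L:H2Hb} supplies the norm bound $\|T_{\overline{h_n}}(f-p_m)\|_{\cH(b)}\le\|h_n/a\|_{H^\infty}\|f-p_m\|_{H^2}$ needed to pass from $p_m$ to $f$.

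For the construction of $h_n$, let $a_n$ be the outer function with $|a_n|=\max(|a|,1/n)$ a.e.\ on $\TT$ and $a_n(0)>0$, and set $h_n:=a/a_n$. Then $1/a_n\in H^\infty$ with $\|1/a_n\|_{H^\infty}\le n$, so $h_n=a\cdot(1/a_n)\in aH^\infty$; and since $|a_n|\ge|a|$ a.e., we have $\|h_n\|_{H^\infty}\le 1$. The crucial point, which draws on $|a|^2=1-|b|^2$ together with the characterization $\log(1-|b|^2)\in L^1(\TT)$ of non-extremality (Theorem~\ref{T:nonextreme}), is that dominated convergence applied to $\log|a_n|=\max(\log|a|,-\log n)$ (dominated by $|\log|a||$) yields $a_n(0)\to a(0)>0$, hence $h_n(0)\to 1$.

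The main analytic step is to prove $T_{\overline{h_n}}f\to f$ in $\cH(b)$. Using the commutation relations among Toeplitz operators with $H^\infty$-symbols, together with $T_{\overline{b}}f=T_{\overline{a}}f^+$ and the injectivity of $T_{\overline{a}}$ on $H^2$ (since $a$ is outer), I expect to derive the identity $(T_{\overline{h_n}}f)^+=T_{\overline{h_n}}f^+$; combined with Theorem~\ref{T:f+} this gives
\[
\|T_{\overline{h_n}}f-f\|_{\cH(b)}^2=\|T_{\overline{h_n}}f-f\|_{H^2}^2+\|T_{\overline{h_n}}f^+-f^+\|_{H^2}^2.
\]
For $g\in H^\infty$, Lemma~\ref{L:harmonic} bounds $\|T_{\overline{h_n}}g-g\|_{H^2}^2$ by $2(1-\Re h_n(0))\|g\|_{H^\infty}^2$, so $T_{\overline{h_n}}g\to g$ in $H^2$. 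The uniform bound $\|T_{\overline{h_n}}\|_{H^2\to H^2}\le 1$ together with the density of $H^\infty$ in $H^2$ extends this convergence to arbitrary $g\in H^2$, and applying it with $g=f$ and $g=f^+$ closes this step. A diagonal argument then produces polynomials $T_{\overline{h_n}}p_m$ converging to $f$ in $\cH(b)$.

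The principal obstacle is the construction of $h_n$ in the second paragraph: we simultaneously need membership in $aH^\infty$, a uniform sup-norm bound, and convergence $h_n(0)\to 1$, and it is precisely the integrability $\log(1-|b|^2)\in L^1$ characterizing non-extremality that makes the third property accessible through the outer regularization $a_n$. Once that is in hand, the remainder of the argument is a clean application of the three preparatory lemmas.
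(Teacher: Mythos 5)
Your proposal is correct and follows essentially the same route as the paper: the same three lemmas, the same key identity $(T_{\overline{h}}f)^+=T_{\overline{h}}(f^+)$, and in fact the same choice of $h$ (your $a/a_n$ is precisely the outer function with modulus $\min\{1,n|a|\}$ that the paper uses). Your dominated-convergence verification that $h_n(0)\to1$ is a welcome elaboration of a step the paper leaves implicit.
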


\begin{proof}
Let $f\in\cH(b)$ and let $\epsilon>0$.
Pick $g_1,g_2\in H^\infty$ such that
$\|f-g_1\|_{H^2}\le\epsilon$ 
and $\|f^+-g_2\|_{H^2}\le\epsilon$.
Then pick $h\in aH^\infty$ such that $\|h\|_{H^\infty}\le1$ and 
$2(1-\Re h(0))\le\epsilon^2/(\|g_1\|_{H^\infty}^2+\|g_2\|_{H^\infty}^2)$. 
(For example, let $h$ be the outer function satisfying $h(0)>0$ and $|h|=\min\{1,n|a|\}$, where $n$ is chosen sufficiently large.) 
Finally, pick a polynomial $p$ such that 
$\|f-p\|_{H^2}\le\epsilon/\|h/a\|_{H^\infty}$. Then, by Lemma~\ref{L:Toeplitzpoly} $T_{\overline{h}}p$ is a polynomial, and we shall show that $\|f-T_{\overline{h}}p\|_{\cH(b)}\le 6\epsilon$.

First of all, using Lemma~\ref{L:harmonic} we have
\begin{align*}
\|f-T_{\overline{h}}f\|_{H^2}
&\le \|f-g_1\|_{H^2}+\|g_1-T_{\overline{h}}g_1\|_{H^2}+\|T_{\overline{h}}g_1-T_{\overline{h}}f\|_{H^2}\\
&\le \|f-g_1\|_{H^2}+\sqrt{2}(1-\Re h(0))^{1/2}\|g_1\|_{H^\infty}+\|g_1-f\|_{H^2}\\
&\le 3\epsilon.
\intertext{Likewise,}
\|f^+-T_{\overline{h}}f^+\|_{H^2}
&\le \|f^+-g_2\|_{H^2}+\|g_2-T_{\overline{h}}g_2\|_{H^2}+\|T_{\overline{h}}g_2-T_{\overline{h}}f^+\|_{H^2}\\
&\le \|f^+-g_2\|_{H^2}+\sqrt{2}(1-\Re h(0))^{1/2}\|g_2\|_{H^\infty}+\|g_2-f^+\|_{H^2}\\
&\le 3\epsilon.
\end{align*}
Now 
$T_{\overline{b}}(T_{\overline{h}}f)
=T_{\overline{h}}(T_{\overline{b}}f)
=T_{\overline{h}}(T_{\overline{a}}(f^+))
=T_{\overline{a}}(T_{\overline{h}}(f^+)),
$
so by Theorem~\ref{T:f+} we have 
$T_{\overline{h}}f\in\cH(b)$ and
$(T_{\overline{h}}f)^+=T_{\overline{h}}(f^+)$,
and
\[
\|f-T_{\overline{h}}f\|_{\cH(b)}^2
=\|f-T_{\overline{h}}f\|_{H^2}^2
+\|f^+-T_{\overline{h}}f^+\|_{H^2}^2
\le 9\epsilon^2+9\epsilon^2\le (5\epsilon)^2.
\]
Finally, using Lemma~\ref{L:H2Hb}, we have
\begin{align*}
\|f-T_{\overline{h}}p\|_{\cH(b)}
&\le \|f-T_{\overline{h}}f\|_{\cH(b)}+\|T_{\overline{h}}f-T_{\overline{h}}p\|_{\cH(b)}\\
& \le \|f-T_{\overline{h}}f\|_{\cH(b)}+\|h/a\|_{H^\infty}\|f-p\|_{H^2}\\
&\le 5\epsilon+\epsilon=6\epsilon,
\end{align*}
as claimed. 
\end{proof}

\section{Toeplitz approximation in $\cH(b)$}\label{S:Toeplitz}

Recapitulating the proof of Theorem~\ref{T:polysdense},
given $f\in\cH(b)$, we can approximate it in $\cH(b)$ by a polynomial of the form $T_{\overline{h}}p$, where $h\in H^\infty$ and $p$ is a polynomial. Indeed, by the triangle inequality,
\[
\|f-T_{\overline{h}}p\|_{\cH(b)}
\le \|f-T_{\overline{h}}f\|_{\cH(b)}
+\|T_{\overline{h}}f-T_{\overline{h}}p\|_{\cH(b)}.
\]
The first term on the right-hand can be made small by choosing $h$ with $\|h\|_{H^\infty}\le1$ and $h(0)$ sufficiently close to $1$. If, in addition, $h\in aH^\infty$, then the second term can be made small by choosing $p$ sufficiently close to $f$ in $H^2$, for example a Taylor partial sum of $f$.

This construction presupposes that $b$ is non-extreme. Indeed it must, since otherwise $\cH(b)$ may well contain no non-zero polynomials. However, the fact that $\|f-T_{\overline h}f\|_{\cH(b)}$ can be made small remains true even in the case when $b$ is extreme. We isolate the idea in the following approximation theorem, valid for all $b$, extreme and non-extreme.

\begin{theorem}\label{T:Toeplitzapprox}
Let $(h_n)_{n\ge1}$ be a sequence in $H^\infty$ such that
$\|h_n\|_{H^\infty}\le1$  and  $\lim_{n\to\infty}h_n(0)=1$.
Then, given $b$ in the unit ball of $H^\infty$ and $f\in\cH(b)$, 
we have $T_{\overline{h}_n}f\in\cH(b)$ for all $n$ and
\[
\lim_{n\to\infty}\|T_{\overline{h}_n}f-f\|_{\cH(b)}=0.
\]
\end{theorem}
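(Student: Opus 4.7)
The plan is to reduce the theorem to a weak-convergence statement for reproducing kernels, exploiting the fact that in a Hilbert space, weak convergence combined with convergence of norms yields strong convergence.

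The first ingredient is the classical fact (see \cite[\S III]{Sa94}) that, for any $h\in H^\infty$ with $\|h\|_{H^\infty}\le 1$ and any $b$ in the unit ball of $H^\infty$, the Toeplitz operator $T_{\overline{h}}$ maps $\cH(b)$ contractively into itself. Applied to the sequence $(h_n)$, this settles the qualitative part of the statement, namely $T_{\overline{h}_n}f\in\cH(b)$, and it provides the uniform bound $\|T_{\overline{h}_n}f\|_{\cH(b)}\le\|f\|_{\cH(b)}$ needed below.

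Next, because the reproducing kernels $\{k_w^b:w\in\DD\}$ span a dense subspace of $\cH(b)$ and $(T_{\overline{h}_n}f)$ is norm-bounded, weak convergence $T_{\overline{h}_n}f\to f$ in $\cH(b)$ will follow from the pointwise statement
\[
\langle T_{\overline{h}_n}f,k_w^b\rangle_{\cH(b)}=(T_{\overline{h}_n}f)(w)\longrightarrow f(w)=\langle f,k_w^b\rangle_{\cH(b)}\qquad(w\in\DD).
\]
Rewriting $(T_{\overline{h}_n}f)(w)=\langle f,h_nk_w\rangle_{H^2}$ via the adjoint relation $(T_{\overline{h}_n})^{*}=T_{h_n}$ on $H^2$, this in turn reduces to showing $h_nk_w\to k_w$ weakly in $H^2$ for each $w\in\DD$. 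The hypotheses $\|h_n\|_{H^\infty}\le 1$ and $h_n(0)\to 1$ force $h_n\to 1$ locally uniformly in $\DD$: by normal families, any subsequential locally uniform limit lies in the closed unit ball of $H^\infty$ and equals $1$ at the origin, so by the maximum modulus principle it is identically $1$. Locally uniform convergence gives pointwise convergence $(h_nk_w)(z)\to k_w(z)$ in $\DD$, which together with the uniform $H^2$ bound $\|h_nk_w\|_{H^2}\le\|k_w\|_{H^2}$ yields the desired weak convergence in $H^2$.

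Once $T_{\overline{h}_n}f\to f$ weakly in $\cH(b)$ is established, the contractivity bound together with weak lower-semicontinuity of the norm forces $\|T_{\overline{h}_n}f\|_{\cH(b)}\to\|f\|_{\cH(b)}$, and the standard identity $\|x_n-x\|^2=\|x_n\|^2-2\Re\langle x_n,x\rangle+\|x\|^2$ upgrades weak to strong convergence. The one non-routine step I expect to be the main obstacle is the contractive action of $T_{\overline{h}}$ on $\cH(b)$ when $b$ is extreme, since the $H^2\oplus H^2$ description of the $\cH(b)$ norm from Theorem~\ref{T:f+} (which powered the arguments in \S\ref{S:polyapprox}) is no longer available; here we must rely on Sarason's general $\cH(b)$-theory rather than on a direct calculation.
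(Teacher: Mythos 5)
Your argument is correct, but it takes a genuinely different route from the paper. The paper reduces the statement to its analogue for $\cH(\overline{b})$ via Theorem~\ref{T:hbbar} (using the commutation $T_{\overline{b}}T_{\overline{h}_n}=T_{\overline{h}_n}T_{\overline{b}}$), and proves that analogue by transporting everything to the weighted space $H^2(\rho)$, $\rho=1-|b|^2$, through the isometry $J_\rho^*$ of Theorem~\ref{T:Hbbar}; there the computation of Lemma~\ref{L:harmonic} gives the explicit bound $\|M_{h_n}^*g-g\|_{H^2(\rho)}^2\le 2(1-\Re h_n(0))\|g\|_{H^\infty}^2$ on the dense class $g\in H^\infty$. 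You instead invoke the classical fact that $T_{\overline{h}}$ maps $\cH(b)$ contractively into itself when $\|h\|_{H^\infty}\le1$ (this is indeed in Sarason's book, though in Chapter~II rather than Chapter~III), and then run the standard Hilbert-space scheme: uniform boundedness plus convergence against the dense family of kernels $k_w^b$ gives weak convergence; contractivity plus weak lower semicontinuity of the norm gives $\|T_{\overline{h}_n}f\|_{\cH(b)}\to\|f\|_{\cH(b)}$; the two together give norm convergence. Each intermediate step you sketch checks out: $h_n\to1$ locally uniformly by normal families and the maximum modulus principle, and $(T_{\overline{h}_n}f)(w)=\langle f,h_nk_w\rangle_{H^2}\to f(w)$ because $h_nk_w\to k_w$ boundedly and pointwise, hence weakly, in $H^2$. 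What your approach buys is brevity and softness: once contractivity is granted, no explicit model of $\cH(\overline{b})$ is needed. What it gives up is quantitativeness --- the paper's proof yields a rate in terms of $1-\Re h_n(0)$ on a dense class, which is exactly what drives the constructive scheme of \S\ref{S:polyapprox} --- and you should be aware that the standard proof of the contractivity you cite passes through essentially the same $\cH(\overline{b})$/$H^2(\rho)$ machinery that the paper deploys directly, so the one step you flag as non-routine is outsourced rather than avoided.
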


The proof of this result requires a little more background on de Branges--Rovnyak spaces, which we now briefly summarize.

Let $b\in H^\infty$ with $\|b\|_{H^\infty}\le1$. 
We define the space $\cH(\overline{b})$ in the same way as $\cH(b)$,
but with the roles of $b$ and $\overline{b}$ interchanged. Thus $\cH(\overline{b})$
is the image of $H^2$ under the operator $(I-T_{\overline b}T_b)^{1/2}$,
with norm defined  by
\[
\|(I-T_{\overline{b}}T_b)^{1/2}f\|_{\cH(\overline{b})}:=\|f\|_{H^2}
\qquad(f\in H^2\ominus \ker(I-T_{\overline{b}}T_b)^{1/2})).
\]

The spaces $\cH(b)$ and $\cH(\overline{b})$ are related through the following theorem.

\begin{theorem}[\protect{\cite[\S II-4]{Sa94}}]\label{T:hbbar}
Let $b$ be an element of the unit ball of $H^\infty$ and let $f\in H^2$. 
Then $f\in\cH(b)$ if and only $T_{\overline{b}}f\in\cH(\overline{b})$, and in this case
\[
\|f\|_{\cH(b)}^2=\|f\|_{H^2}^2+\|T_{\overline{b}}f\|_{\cH(\overline{b})}^2.
\]
\end{theorem}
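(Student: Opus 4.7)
The plan is to use Theorem~\ref{T:hbbar} to split the $\cH(b)$-norm of $T_{\overline{h_n}}f-f$ into an $H^2$-piece and an $\cH(\overline b)$-piece, and then to handle each piece separately. Set $F:=T_{\overline b}f$, which lies in $\cH(\overline b)$ by Theorem~\ref{T:hbbar}. Since co-analytic Toeplitz operators commute, $T_{\overline b}(T_{\overline{h_n}}f)=T_{\overline{h_n}}F$. Once I establish that $T_{\overline{h_n}}$ maps $\cH(\overline b)$ contractively into itself (see below), Theorem~\ref{T:hbbar} yields $T_{\overline{h_n}}f\in\cH(b)$ with
\[
\|T_{\overline{h_n}}f-f\|_{\cH(b)}^2=\|T_{\overline{h_n}}f-f\|_{H^2}^2+\|T_{\overline{h_n}}F-F\|_{\cH(\overline b)}^2.
\]
The $H^2$-piece tends to zero by a standard density argument: given $\epsilon>0$, pick $g\in H^\infty$ with $\|f-g\|_{H^2}<\epsilon$, then combine Lemma~\ref{L:harmonic} with the $H^2$-contractivity of $T_{\overline{h_n}}$.

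To show $T_{\overline h}$ is a contraction on $\cH(\overline b)$ when $\|h\|_{H^\infty}\le 1$, I would invoke Douglas' factorization lemma. Write $D:=(I-T_{\overline b}T_b)^{1/2}$; since $T_{\overline b}T_b=T_{|b|^2}$, we have $D^2=T_{1-|b|^2}$. Using the identities $T_{\overline h}T_{\overline b}=T_{\overline{hb}}$ and $T_bT_h=T_{bh}$, a short computation gives
\[
T_{\overline h}D^2T_h=T_{|h|^2}-T_{|bh|^2}=T_{|h|^2(1-|b|^2)}\le T_{1-|b|^2}=D^2,
\]
the last inequality being Toeplitz positivity applied to the non-negative symbol $(1-|h|^2)(1-|b|^2)$. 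Douglas then produces a contraction $Q$ on $H^2$ with $T_{\overline h}D=DQ$; writing $F=Dg$ with $g\perp\ker D$ (so $\|F\|_{\cH(\overline b)}=\|g\|_{H^2}$) gives $T_{\overline h}F=D(Qg)\in\cH(\overline b)$ and $\|T_{\overline h}F\|_{\cH(\overline b)}\le\|Qg\|_{H^2}\le\|g\|_{H^2}=\|F\|_{\cH(\overline b)}$.

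Finally, for the $\cH(\overline b)$-convergence $T_{\overline{h_n}}F\to F$, I would run a weak-to-strong argument. The contractivity above gives $\limsup_n\|T_{\overline{h_n}}F\|_{\cH(\overline b)}\le\|F\|_{\cH(\overline b)}$. Since $\cH(\overline b)\hookrightarrow H^2$ contractively, the $H^2$-convergence applied to $F$ yields pointwise convergence $T_{\overline{h_n}}F(w)\to F(w)$ on $\DD$; together with the uniform norm bound and the density of the span of the reproducing kernels $\{k^{\overline b}_w\}_{w\in\DD}$ in $\cH(\overline b)$, this implies weak convergence $T_{\overline{h_n}}F\rightharpoonup F$ in $\cH(\overline b)$. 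Weak lower semicontinuity of the norm forces $\|F\|_{\cH(\overline b)}\le\liminf_n\|T_{\overline{h_n}}F\|_{\cH(\overline b)}$, hence $\|T_{\overline{h_n}}F\|_{\cH(\overline b)}\to\|F\|_{\cH(\overline b)}$, and in a Hilbert space weak convergence together with norm convergence implies strong convergence. The main technical step I expect is the Douglas-factorization contractivity; once it is in hand the remaining weak-to-strong upgrade is routine.
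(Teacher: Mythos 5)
Your proposal does not prove the statement it was asked to prove. The statement is Theorem~\ref{T:hbbar} itself: the equivalence ``$f\in\cH(b)$ if and only if $T_{\overline{b}}f\in\cH(\overline{b})$'' together with the norm identity $\|f\|_{\cH(b)}^2=\|f\|_{H^2}^2+\|T_{\overline{b}}f\|_{\cH(\overline{b})}^2$. Your very first sentence announces that you will \emph{use} Theorem~\ref{T:hbbar} to split a norm, and the argument that follows is in fact a proof of Theorem~\ref{T:Toeplitzapprox}, with Theorem~\ref{T:hbbar} taken as a black box. As an answer to the question posed this is circular: the result to be established is invoked as a known tool in your opening step, and nothing in the write-up engages with its actual content. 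A genuine proof would have to work directly with the definitions of $\cH(b)$ and $\cH(\overline{b})$ as the ranges of $(I-T_bT_{\overline{b}})^{1/2}$ and $(I-T_{\overline{b}}T_b)^{1/2}$, exploiting the intertwining relation $T_{\overline{b}}\,(I-T_bT_{\overline{b}})=(I-T_{\overline{b}}T_b)\,T_{\overline{b}}$ (hence the same relation with square roots) to transport elements and norms between the two spaces; this is the route taken in the cited source \cite[\S II-4]{Sa94}, which the paper itself relies on without reproving.

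For what it is worth, read as a proof of Theorem~\ref{T:Toeplitzapprox} your argument appears sound and is genuinely different from the paper's: where the paper passes through the $H^2(\rho)$ model of $\cH(\overline{b})$ (Theorem~\ref{T:Hbbar}) to reduce the $\cH(\overline{b})$-convergence to Lemma~\ref{L:harmonic} in a weighted space, you instead establish contractivity of $T_{\overline{h}}$ on $\cH(\overline{b})$ via the positivity of $T_{(1-|h|^2)(1-|b|^2)}$ and Douglas' lemma, and then upgrade weak convergence to norm convergence. The Toeplitz computation $T_{\overline{h}}T_{1-|b|^2}T_h=T_{|h|^2(1-|b|^2)}$ is legitimate (co-analytic symbol on the left, analytic on the right), and the weak-to-strong step is standard. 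But none of this bears on Theorem~\ref{T:hbbar}, which remains unproved in your submission.
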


The advantage of $\cH(\overline{b})$ over $\cH(b)$, for our purposes at least,
is that it has another description making it a little more amenable.

\begin{theorem}[\protect{\cite[\S III-2]{Sa94}}]\label{T:Hbbar}
Let $b\in H^\infty$ with $\|b\|_{H^\infty}\le1$.
Let  $\rho:=1-|b|^2$ on $\TT$,
let $H^2(\rho)$ be the closure of the polynomials in $L^2(\TT,\,\rho\,d\theta/2\pi)$ and let $J_\rho:H^2\to H^2(\rho)$ be the natural inclusion. Then $J_\rho^*$ is an isometry of $H^2(\rho)$ onto
$\cH(\overline{b})$.
\end{theorem}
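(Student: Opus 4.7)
The plan is to exploit the key identity that, for $f \in H^2$,
\begin{equation*}
\|J_\rho f\|_{H^2(\rho)}^2 = \int_\TT |f|^2(1-|b|^2)\,\frac{d\theta}{2\pi} = \|f\|_{H^2}^2 - \|T_b f\|_{H^2}^2 = \langle (I-T_{\overline b}T_b)f,f\rangle_{H^2} = \|Uf\|_{H^2}^2,
\end{equation*}
where $U := (I - T_{\overline b} T_b)^{1/2}$. Here I use that $b \in H^\infty$ forces $T_b f = bf$ so that $\|T_b f\|_{H^2} = \|bf\|_{L^2(\TT)}$. This identity says $J_\rho$ is a contraction with $\ker J_\rho = \ker U$ and, more importantly, that $J_\rho$ and $U$ are the same operator up to a unitary change of target; this is what will ultimately tie $H^2(\rho)$ to $\cH(\overline b)$.

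Building on this, I would next define the isometry $Uf \mapsto J_\rho f$ from $U(H^2)\subset H^2$ into $J_\rho(H^2)\subset H^2(\rho)$, which is well-defined and norm-preserving by the identity above. Since $U$ is positive self-adjoint, $U(H^2)$ is dense (in $H^2$-norm) in $(\ker U)^\perp$; and since $J_\rho$ is the natural inclusion, $J_\rho(H^2)$ contains every polynomial and is therefore dense in $H^2(\rho)$ by the very definition of $H^2(\rho)$. Passing to the closure produces a unitary
\[
\widetilde W : (\ker U)^\perp \longrightarrow H^2(\rho), \qquad \widetilde W(Uf) = J_\rho f \quad (f \in H^2).
\]

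Finally I would extend $\widetilde W$ by zero on $\ker U$, turning it into a partial isometry $H^2 \to H^2(\rho)$ that factors $J_\rho$ as $J_\rho = \widetilde W \circ U$. Using $U^* = U$, taking adjoints yields
\[
J_\rho^* = U\, \widetilde W^*.
\]
Here $\widetilde W^* : H^2(\rho) \to (\ker U)^\perp$ is the unitary inverse of $\widetilde W$, while $U|_{(\ker U)^\perp}$ is, by the very definition of the $\cH(\overline b)$-norm, a unitary from $(\ker U)^\perp$ onto $\cH(\overline b)$. The composition $U\,\widetilde W^*$ is therefore a unitary from $H^2(\rho)$ onto $\cH(\overline b)$, which is exactly the claim. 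The only real subtlety is bookkeeping between the three inner products on $H^2$, $H^2(\rho)$ and $\cH(\overline b)$, together with the density step for $\widetilde W$; no deeper obstacle arises once the norm identity has been secured.
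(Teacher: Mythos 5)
Your argument is correct. Note that the paper does not prove this statement at all --- it is quoted with a citation to Sarason's book (\S III-2), where the standard proof is essentially the one you give: the computation $\langle J_\rho^* J_\rho f,f\rangle_{H^2}=\langle (I-T_{\overline b}T_b)f,f\rangle_{H^2}$, followed by the polar-decomposition factorization $J_\rho=\widetilde W\,(I-T_{\overline b}T_b)^{1/2}$ with $\widetilde W$ a partial isometry whose final space is all of $H^2(\rho)$ (by density of the polynomials), so that $J_\rho^*=(I-T_{\overline b}T_b)^{1/2}\widetilde W^*$ is a composition of two unitaries onto $\cH(\overline b)$. All the auxiliary steps you flag (well-definedness of $\widetilde W$, density of $U(H^2)$ in $(\ker U)^\perp$, and the fact that $U$ maps $(\ker U)^\perp$ isometrically onto $\cH(\overline b)$ by the very definition of the norm) check out.
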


Using this result, we can prove a version of Theorem~\ref{T:Toeplitzapprox} for $\cH(\overline{b})$.

\begin{theorem}\label{T:Toeplitzbbar}
Let $(h_n)_{n\ge1}$ be a sequence in $H^\infty$ such that
$\|h_n\|_{H^\infty}\le1$  and  $\lim_{n\to\infty}h_n(0)=1$.
Then, given $b$ in the unit ball of $H^\infty$ and $f\in\cH(\overline{b})$, 
we have $T_{\overline{h}_n}f\in\cH(\overline{b})$ for all $n$ and
\[
\lim_{n\to\infty}\|T_{\overline{h}_n}f-f\|_{\cH(\overline{b})}=0.
\]
\end{theorem}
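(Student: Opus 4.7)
The plan is to transport the problem to $H^2(\rho)$ via the isometry $J_\rho^*$ of Theorem~\ref{T:Hbbar}, where $\rho:=1-|b|^2$, and then recognize $T_{\overline{h}_n}$ as essentially the compression of multiplication by $\overline{h}_n$ on $H^2(\rho)$. Writing $P_\rho$ for the orthogonal projection of $L^2(\TT,\rho\,d\theta/2\pi)$ onto $H^2(\rho)$, and setting $S_n g := P_\rho(\overline{h}_n g)$ for $g\in H^2(\rho)$, the first step is to show the intertwining relation $T_{\overline{h}_n}J_\rho^* = J_\rho^* S_n$. This will immediately yield $T_{\overline{h}_n}f\in\cH(\overline{b})$ and reduce the theorem to proving $\|S_n g - g\|_{H^2(\rho)}\to 0$, where $g\in H^2(\rho)$ is the preimage of $f$ under $J_\rho^*$.

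For the intertwining, I would test both sides against an arbitrary polynomial $q$, using the fact that $h_n q\in H^2(\rho)$ (since $h_n q\in H^\infty$ and its Taylor partial sums converge in $H^2$, hence in $L^2(\rho)$, as $\rho\le 1$). Then $\langle T_{\overline{h}_n}f,q\rangle_{H^2} = \langle f, h_n q\rangle_{H^2} = \langle g, h_n q\rangle_{H^2(\rho)}$, and this equals $\langle \overline{h}_n g, q\rangle_{L^2(\rho)} = \langle S_n g, q\rangle_{H^2(\rho)} = \langle J_\rho^* S_n g, q\rangle_{H^2}$. Density of polynomials in $H^2$ closes the argument.

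To prove $S_n g\to g$ in $H^2(\rho)$, I observe that $S_n$ is a contraction on $H^2(\rho)$, being a composition of $P_\rho$ and multiplication by $\overline{h}_n$ (both of norm $\le 1$ thanks to $\|h_n\|_{H^\infty}\le 1$), so $\|S_n-I\|\le 2$ uniformly in $n$. Hence, by the defining density of polynomials in $H^2(\rho)$, it suffices to establish the convergence on polynomials $p$. For such $p$ one has $S_n p - p = P_\rho((\overline{h}_n - 1)p)$, and so
\[
\|S_n p - p\|_{H^2(\rho)}^2 \le \|(\overline{h}_n-1)p\|_{L^2(\rho)}^2\le \|p\|_{H^\infty}^2\int_0^{2\pi}|h_n-1|^2\,\frac{d\theta}{2\pi}\le 2\|p\|_{H^\infty}^2(1-\Re h_n(0)),
\]
using $\rho\le 1$ and the elementary inequality $|h_n-1|^2=|h_n|^2-2\Re h_n+1\le 2(1-\Re h_n)$.

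The main subtlety lies in this last step, which bridges the hypothesis $h_n(0)\to 1$—ostensibly a condition at a single interior point—with the $L^2$ convergence $h_n\to 1$ on the whole circle that the estimate requires. The bridge is provided by the mean value property applied to the non-negative harmonic function $1-\Re h_n$, exactly the mechanism already used in Lemma~\ref{L:harmonic}; without the assumption $\|h_n\|_{H^\infty}\le 1$ there would be no way to upgrade convergence at one interior point to convergence on the boundary, and the argument would collapse.
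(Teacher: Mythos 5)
Your proof is correct and follows essentially the same route as the paper: transport the problem to $H^2(\rho)$ via the isometry $J_\rho^*$, identify $T_{\overline{h}_n}$ on $\cH(\overline{b})$ with the compression $S_n=M_{h_n}^*$ of multiplication on $H^2(\rho)$, and finish by a uniform-boundedness-plus-density argument together with the mean-value estimate $2(1-\Re h_n(0))\|\cdot\|_{H^\infty}^2$. The only cosmetic differences are that the paper obtains the intertwining by taking adjoints in $M_{h_n}J_\rho=J_\rho T_{h_n}$ rather than testing against polynomials, and runs the density step over $H^\infty$ instead of over polynomials.
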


\begin{proof}
Let $\rho:=1-|b|^2$ on $\TT$ and define $H^2(\rho)$ and $J_\rho$ as in the preceding theorem. Given $h\in H^\infty$, let $M_h:H^2(\rho)\to H^2(\rho)$ be the operator of multiplication by $h$, namely $M_hg:=hg~(g\in H^2(\rho))$. Note that $M_hJ_\rho=J_\rho T_h$, so, taking adjoints,
we have
\begin{equation}\label{E:intertwine}
J_\rho^*M_h^*=T_{\overline{h}}J_\rho^*.
\end{equation}

Now let $f\in \cH(\overline{b})$. 
By Theorem~\ref{T:Hbbar}, 
there exists $g\in H^2(\rho)$ such that $f=J_\rho^*g$. 
Using \eqref{E:intertwine}, we have
$T_{\overline{h}_n}f=T_{\overline{h}_n}J_\rho^*g=J_\rho^*M_{h_n}^*g$. 
As $J_\rho^*$ is an isometry of $H^2(\rho)$ onto $\cH(\overline{b})$, it follows that 
$T_{\overline{h}_n}f\in\cH(\overline{b})$ and that
\[
\|T_{\overline{h}_n}f-f\|_{\cH(\overline{b})}
=\|M_{h_n}^* g-g\|_{H^2(\rho)}.
\]

It therefore remains to show that $\lim_{n\to\infty}\|M_{h_n}^* g-g\|_{H^2(\rho)}=0$ for all $g\in H^2(\rho)$. It suffices to prove this when $g\in H^\infty$, because $H^\infty$ is dense in $H^2(\rho)$ and the operators $M_{h_n}^*$ are uniformly bounded in norm (by $1$). For $g\in H^\infty$, the same proof as that of Lemma~\ref{L:harmonic} gives that
\[
\|M_{h_n}^* g-g\|_{H^2(\rho)}^2\le 2(1-\Re h_n(0))\|g\|_{H^\infty}^2,
\]
and as the right-hand side tends to zero, the proof is complete.
\end{proof}

Finally, we deduce the corresponding result for $\cH(b)$.

\begin{proof}[Proof of Theorem~\ref{T:Toeplitzapprox}]
Let $f\in\cH(b)$. By Theorem~\ref{T:hbbar} 
we have $T_{\overline{b}}f\in\cH(\overline{b})$.
For each $n$,
we have $T_{\overline{b}}T_{\overline{h}_n}f=
T_{\overline{h}_n}T_{\overline{b}}f\in\cH(\overline{b})$,
and hence $T_{\overline{h}_n}f\in\cH(b)$.
Also, by Theorem~\ref{T:hbbar} again,
\[
\|T_{\overline{h}_n}f-f\|_{\cH(b)}^2
=\|T_{\overline{h}_n}f-f\|_{H^2}^2+\|T_{\overline{h}_n}T_{\overline{b}}f-T_{\overline{b}}f\|_{\cH(\overline{b})}^2.
\]
The second term on the right-hand side tends to zero
by Theorem~\ref{T:Toeplitzbbar}. The first term tends
to zero by Lemma~\ref{L:harmonic} and the density of $H^\infty$ in $H^2$.
\end{proof}

\section*{Acknowledgement}
Part of this research was carried out during a Research-in-Teams meeting at the Banff International Research Station (BIRS). We thank BIRS for its hospitality.


\begin{thebibliography}{9}

\bibitem{CGR10}
N. Chevrot, D. Guillot, T. Ransford,
De Branges--Rovnyak spaces and Dirichlet spaces,
\emph{J. Funct. Anal.} 259 (2010), 2366--2383.

\bibitem{dBR66a}
L. de Branges, J. Rovnyak, 
Canonical models in quantum scattering theory, in  
\emph{Perturbation Theory and its Applications in Quantum Mechanics 
(Proc. Adv. Sem. Math. Res. Center, U.S. Army, 
Theoret. Chem. Inst., Univ. of Wisconsin, Madison, Wis., 1965)},  
pp. 295--392, Wiley, New York, 1966.

\bibitem{dBR66b}
L. de Branges, J. Rovnyak, 
\emph{Square Summable Power Series},
Holt, Rinehart and Winston, New York, 1966.

\bibitem{Du00}
P. Duren,
\emph{Theory of $H^p$ Spaces},
Dover, Mineola NY, 2000.

\bibitem{FM12}
E. Fricain, J. Mashreghi,
\emph{Theory of $\cH(b)$ Spaces},
New Mathematical Monographs vols 20 and 21,
Cambridge University Press, Cambridge,
to appear.

\bibitem{Sa94}
D. Sarason, 
\textit{Sub-Hardy Hilbert Spaces in the Unit Disk},
John Wiley \& Sons Inc., New York, 1994.

\bibitem{Sa97}
D. Sarason, 
Local Dirichlet spaces as de Branges-Rovnyak spaces,
\textit{Proc. Amer. Math. Soc.} {125} (1997), 2133--2139.
      
\end{thebibliography}
\end{document}